\newtheorem{theorem}{Theorem}[section]
\newtheorem{lemma}[theorem]{Lemma}
\newtheorem{proposition}[theorem]{Proposition}
\newtheorem{corollary}[theorem]{Corollary}
\theoremstyle{definition}
\newtheorem{example}[theorem]{Example}
\theoremstyle{remark}
\newtheorem{remark}[theorem]{Remark}
\numberwithin{equation}{section}
\title { Izbačeno }
\begin{document}

\author{Stefan Ivkovi\'{c} }

\vspace{25pt}
	
\title{The generalized spectra in $ C^{*}$-algebras of operators over $C^{*}$-algebras}
       
\maketitle

\begin{abstract}
		In this paper we consider shift operators, self-adjoint, unitary and normal operators on the standard module over a unital $C^{*}$-algebra $\mathcal{A}$. We define various generalized spectra in $\mathcal{A}$ of these operators, give description of such spectra of these operators and investigate their further properties.
\end{abstract} 

\vspace{45pt}
	
\begin{flushleft}
The Mathematical Institute of the Serbian Academy of Sciences and Arts, \\
p.p. 367, Kneza Mihaila 36, 11000 Beograd, Serbia,\\
Tel.: +381-69-774237 \\
email: stefan.iv10@outlook.com  	
%	\textbf{Keywords} Semi $\mathcal{A}$-Fredholm operator, generalized $\mathcal{A}$-Weyl operator, semi-$\mathcal{A}$-$B$-Fredholm operator, non-adjointable semi-$\mathcal{A}$-Fredholm operator.
\end{flushleft} 

% \vspace{15pt}

\begin{flushleft}
%	\textbf{Mathematics Subject Classification (2010)} Primary MSC 47A53; Secondary MSC 46L08.
\end{flushleft}

%  \vspace{30pt}

\section{Introduction}
Hilbert $C^{*}$-modules are natural generalizations of Hilbert spaces when the field of scalars is replaced by an arbitrary $C^{*}$-algebra. In this paper we let $\mathcal{A}$ be a unital $C^{*}$-algebra and $H_{\mathcal{A}} $ denote the standard module over $\mathcal{A},$ that is $  H_{\mathcal{A}}=l_{2}(\mathcal{A}).$ Moreover, we let $B^{a}(H_{\mathcal{A}}) $ be the $C^{*}$-algebra of all $\mathcal{A}$-linear, bounded, adjointable operators on $H_{\mathcal{A}} .$

Our starting question is the following: If $\mathcal{A}$ is a $C^{*}$-algebra, then for $\alpha \in \mathcal{A}$ could we define in a suitable way the operator $\alpha I $ on $H_{\mathcal{A}} $ and the generalized spectra in $\mathcal{A}$ of operators in $B^{a}(H_{\mathcal{A}}) $ by setting for every $F \in B^{a}(H_{\mathcal{A}}) 
$ $\sigma^{\mathcal{A}}(F)=\lbrace \alpha \in \mathcal{A} \mid F-\alpha I  $  is not invertible in$B^{a}(H_{\mathcal{A}}) \rbrace ?$\\
For $a \in \mathcal{A} $ we may let $\alpha I $ be the operator on $H_{\mathcal{A}} $ given by $\alpha I (x_{1},x_{2}, \cdots)=(\alpha x_{1}, \alpha x_{2}, \cdots ) .$ It is straightforward to check that $\alpha I$ is an $\mathcal{A}$-linear operator on $H_{\mathcal{A}}.$ Moreover, $\alpha I$ is bounded and $\parallel \alpha I \parallel = \parallel \alpha \parallel.$ Finally, $\alpha I$ is adjointable and its adjoint is given by $(\alpha I)^{*}=\alpha^{*}I.$ 

We introduce then the following notion:\\
$\sigma^{\mathcal{A}}(F)=\lbrace \alpha \in \mathcal{A} \mid F-\alpha I  $  is not invertible in$B^{a}(H_{\mathcal{A}}) \rbrace ;$\\
$\sigma_{p}^{\mathcal{A}}(F)=\lbrace \alpha \in \mathcal{A} \mid \ker(F-\alpha I) \neq \lbrace 0 \rbrace \rbrace;$\\
$\sigma_{rl}^{\mathcal{A}}(F)=\lbrace \alpha \in \mathcal{A} \mid F-\alpha I  $  is bounded below, but not surjective on $H_{\mathcal{A}}) \rbrace ;$\\
$\sigma_{cl}^{\mathcal{A}}(F)=\lbrace \alpha \in \mathcal{A} \mid Im(F-\alpha I)  $   is \underline{not} closed $\rbrace.$ (where $F \in B^{a}(H_{\mathcal{A}}) )$). 

Recall that not all closed submodules of $H_{\mathcal{A}}$ are orthogonally complementable in $H_{\mathcal{A}} $ which differs from the situation of Hilbert spaces. It may happen that $\overline{Im (F-\alpha I)} \oplus Im (F-\alpha I)^{\perp}  \subsetneqq H_{\mathcal{A}} .$  However, if $Im (F-\alpha I) $ is closed, then $Im(F^{*}-\alpha^{*}I) $ is closed and we also have $H_{\mathcal{A}}=Im(F-\alpha I) \oplus \ker (F^{*}-\alpha^{*}I)=\ker (F-\alpha I) \oplus Im (F^{*}-\alpha^{*}I)  $ whenever $F \in B^{a}(H_{\mathcal{A}}) .$

Therefore, it is more convinient in this setting to work with $\sigma_{rl}^{\mathcal{A}}(F) $ and $\sigma_{cl}^{\mathcal{A}}(F) $ for $F \in B^{a}(H_{\mathcal{A}}) $ instead of the residual and the continuous spectrum. 

	Note that we have obviously $\sigma^{\mathcal{A}}(F)=\sigma_{p}^{\mathcal{A}}(F) \cup \sigma_{rl}^{\mathcal{A}}(F) \cup \sigma_{cl}^{\mathcal{A}}(F) $ and $\sigma^{\mathcal{A}}(F^{*})=(\sigma^{\mathcal{A}}(F))^{*} .$

\begin{flushleft}
	The challenges which arise are the following:
\end{flushleft}
1) $\mathcal{A}$ may be non commutative; \\
2) Not all non-zero elements in $\mathcal{A}$ are invertible. Moreover, even if $ \alpha \in \mathcal{A} \cap G(\mathcal{A}) ,$ we do not have in general that $ \parallel \alpha^{-1} \parallel = \dfrac{1}{\parallel \alpha \parallel} .  $  Therefore $ \sigma^{\mathcal{A}}(F) $ may be unbounded. ( However $ \sigma^{\mathcal{A}}(F) $ is always closed in $\mathcal{A}$  ). 

The notion of generalized spectra has originally been introduced in \cite{IS4}. In this paper we give description of the generalized spectra of shift operators, unitary, self-adjoint and normal operators on $H_{\mathcal{A}}$ and investigate some further properties of these spectra. Most of the results in this paper are generalizations of the results from \cite{KU}.\\

\section{Main results}
We start with the following proposition.
\begin{proposition} \label{09p 01} %\underline{{09p 01}}
	Let $\mathcal{A}$ be a unital $C^{*} $-algebra $\lbrace e_{k} \rbrace_{k \in \mathbb{N}} $ denote the standard orthonormal basis of $H_{\mathcal{A}} $ and $S$ be the operator defined by $Se_{k}=e_{k+1}, k \in \mathbb{N},$ that is $S$ is unilateral shift and $S^{*}e_{k+1}=e_{k} $ for all $k \in \mathbb{N}.$ If $\mathcal{A}=L^{\infty} ((0,1)) $ or if $\mathcal{A}=C([0,1]) ,$ then $\sigma^{\mathcal{A}}(S)=\lbrace \alpha \in \mathcal{A} \mid \inf \vert \alpha \vert \leq 1 \rbrace ,$  (where in the case when $\mathcal{A}=L^{\infty} ((0,1)),$ we set $\inf \vert \alpha \vert = \inf \lbrace C >0 \mid \mu ( \vert \alpha \vert^{-1}[0,C])>0 \rbrace = \sup \lbrace K>0 \mid \vert \alpha \vert > K ) \text{ a.e. on } (0,1)\rbrace ).$ Moreover, $\sigma_{p}^{\mathcal{A}}(S)= \varnothing $ in both cases. 
\end{proposition}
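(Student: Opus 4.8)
The plan is to prove the two inclusions $\{\alpha\in\mathcal{A}\mid \inf|\alpha|\le 1\}\subseteq\sigma^{\mathcal{A}}(S)$ and $\sigma^{\mathcal{A}}(S)\subseteq\{\alpha\in\mathcal{A}\mid \inf|\alpha|\le 1\}$ separately, and then deal with $\sigma_p^{\mathcal{A}}(S)$. For the inclusion $\sigma^{\mathcal{A}}(S)\subseteq\{\alpha\mid\inf|\alpha|\le 1\}$ I would argue contrapositively. If $\inf|\alpha|>1$ then, in both $\mathcal{A}=C([0,1])$ and $\mathcal{A}=L^{\infty}((0,1))$, one has $|\alpha|\ge K$ (pointwise, resp.\ a.e.) for some $K>1$, so $\alpha$ is invertible in $\mathcal{A}$ with $\|\alpha^{-1}\|=1/\inf|\alpha|<1$. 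Since $S$ is an isometry, $\|\alpha^{-1}I\cdot S\|\le\|\alpha^{-1}\|<1$, so $I-\alpha^{-1}I\,S$ is invertible in $B^{a}(H_{\mathcal{A}})$ by the Neumann series; as $S-\alpha I=-\alpha I\,(I-\alpha^{-1}I\,S)$ and $-\alpha I$ is invertible with inverse $-\alpha^{-1}I$, it follows that $S-\alpha I$ is invertible and $\alpha\notin\sigma^{\mathcal{A}}(S)$.

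For the reverse inclusion I would first show $\{\alpha\mid\inf|\alpha|<1\}\subseteq\sigma^{\mathcal{A}}(S)$ and then pass to the closure. Given $\alpha$ with $\inf|\alpha|<1$, choose $K\in(\inf|\alpha|,1)$ and a nonzero element $c\in\mathcal{A}$ whose support is contained in $\{|\alpha|\le K\}$ (a bump function inside the nonempty open set $\{|\alpha|<K\}$ when $\mathcal{A}=C([0,1])$; the characteristic function $\chi_{E}$ of $E=\{|\alpha|\le K\}$, which has positive measure, when $\mathcal{A}=L^{\infty}((0,1))$). Then $x=((\alpha^{*})^{k-1}c)_{k\ge 1}$ lies in $H_{\mathcal{A}}$: by commutativity $\sum_{k}x_{k}^{*}x_{k}=\sum_{k}|\alpha|^{2(k-1)}|c|^{2}$, and the partial sums converge in norm since on the support of $c$ the series is dominated by a geometric series with ratio $\le K^{2}<1$ while it vanishes elsewhere. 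Moreover $x\neq 0$ (its first coordinate is $c$) and a coordinatewise computation gives $(S^{*}-\alpha^{*}I)x=0$, so $(S-\alpha I)^{*}=S^{*}-\alpha^{*}I$ is not injective, hence $S-\alpha I$ is not invertible and $\alpha\in\sigma^{\mathcal{A}}(S)$. Finally, if $\inf|\alpha|=1$, put $\alpha_{n}=(1-\tfrac1n)\alpha$; then $\alpha_{n}\to\alpha$ in norm and $\inf|\alpha_{n}|=(1-\tfrac1n)\inf|\alpha|=1-\tfrac1n<1$, so $\alpha_{n}\in\sigma^{\mathcal{A}}(S)$ and therefore $\alpha\in\overline{\sigma^{\mathcal{A}}(S)}=\sigma^{\mathcal{A}}(S)$, using that $\sigma^{\mathcal{A}}(S)$ is closed as recalled in the introduction.

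For $\sigma_p^{\mathcal{A}}(S)=\varnothing$, suppose $(S-\alpha I)x=0$ for $x=(x_{k})_{k\ge 1}\in H_{\mathcal{A}}$. Comparing coordinates yields $\alpha x_{1}=0$ and $x_{k-1}=\alpha x_{k}$ for $k\ge 2$, hence $\alpha^{k-1}x_{k}=x_{1}$, $\alpha^{k}x_{k}=0$, and $x_{j}=\alpha^{k}x_{j+k}$ for all $j,k\ge 1$. Since $\sum_{k}x_{k}^{*}x_{k}$ converges in $\mathcal{A}$, we have $\|x_{k}\|\to 0$. Working pointwise (a.e.\ in the $L^{\infty}$ case): at a point $t$ with $|\alpha(t)|>1$ we have $\alpha(t)\neq 0$, so $\alpha(t)^{j}x_{j}(t)=0$ forces $x_{j}(t)=0$; at a point with $|\alpha(t)|\le 1$ we have $|x_{j}(t)|=|\alpha(t)|^{k}\,|x_{j+k}(t)|\le\|x_{j+k}\|\to 0$ as $k\to\infty$, so again $x_{j}(t)=0$. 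Hence $x_{j}=0$ for every $j$, i.e.\ $x=0$; thus $\ker(S-\alpha I)=\{0\}$ for every $\alpha\in\mathcal{A}$.

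The only genuinely delicate point is the boundary case $\inf|\alpha|=1$, which I avoid analysing directly by combining the strict-inequality case with the closedness of $\sigma^{\mathcal{A}}(S)$; a direct argument showing that $S-\alpha I$ (or its adjoint) fails to be bounded below is possible but more technical, since on the relevant set one only knows $|\alpha|\ge 1$ rather than $|\alpha|\le 1-\delta$. It is also worth emphasising that the computation of $\sigma_p^{\mathcal{A}}(S)$ uses commutativity essentially, via the pointwise dichotomy $|\alpha(t)|\le 1$ versus $|\alpha(t)|>1$, which is exactly why the hypothesis restricts to $C([0,1])$ and $L^{\infty}((0,1))$.
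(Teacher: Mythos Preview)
Your proof is correct and follows essentially the same strategy as the paper: Neumann series for $\inf|\alpha|>1$, an explicit nonzero element of $\ker(S^{*}-\alpha^{*}I)=\operatorname{Im}(S-\alpha I)^{\perp}$ built from a localized $c$ (or $g$, $\chi_{M_\epsilon}$) when $\inf|\alpha|<1$, and closedness of $\sigma^{\mathcal A}(S)$ for the boundary case. The only noticeable difference is in the $\sigma_p^{\mathcal A}(S)=\varnothing$ argument: the paper uses a cleaner support-based induction (from $\alpha x_1=0$ one gets $x_1=0$ on $\operatorname{supp}\alpha$, and from $x_1=\alpha x_2$ one gets $x_1=0$ off $\operatorname{supp}\alpha$, hence $x_1=0$, then iterate), whereas you invoke $\|x_k\|\to 0$ together with the relation $x_j=\alpha^{k}x_{j+k}$; both are valid, though your dichotomy $|\alpha(t)|>1$ versus $|\alpha(t)|\le 1$ is sharper than needed, since the first branch already covers every $t$ with $\alpha(t)\neq 0$.
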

\begin{proof}
	We have two cases. Case 1: In this case we consider $\mathcal{A}=C([0,1]) .$ Let $\alpha \in \mathcal{A} $ and suppose that inf $ \vert \alpha \vert <1.$ Since $\vert \alpha \vert $ is continuous, we may find an open interval $(t_{1},t_{2})  \subseteq (0,1)  $ such that $\vert \alpha (t) \vert <1-\epsilon $ for all $t \in (t_{1},t_{2}) ,$ where $0 < \epsilon < 1-\inf \vert \alpha \vert .$ We may find some $ g \in \mathcal{A} $ sunce that supp $ g \subseteq (t_{1},t_{2})$ and $0 \leq g \leq 1 .$ Consider $ x_{\alpha}=(g, \overline{\alpha}g,\overline{\alpha}^{2}g, \cdots  ) .$ Then $\langle  (\alpha I-S) e_{k},x_{\alpha} \rangle =\overline{\alpha}^{k}g - \overline{\alpha}^{k}g=0  .$ Hence $x_{\alpha} \in Im (\alpha I-S)^{\perp} $ and $x_{\alpha } \neq 0  ,$ which gives that $\alpha \in \sigma^{\mathcal{A}}(S)  .$ Hence $ \lbrace \alpha \in \mathcal{A} \mid \inf \vert \alpha \vert <1 \rbrace  \subseteq  \sigma^{\mathcal{A}}(S) .$ Since $\sigma^{\mathcal{A}}(S) $ is closed in the norm topolgy in $\mathcal{A} $ it follows that inf $ \lbrace \alpha \in \mathcal{A} \mid \inf \vert \alpha \vert \leq 1 \rbrace  \subseteq  \sigma^{\mathcal{A}}(S) .$  On the other hand, if $\alpha \in \mathcal{A} $ and  $\inf \vert \alpha \vert > 1 ,$ then $\alpha $ is invertible and  $\sup \vert \alpha^{-1} \vert = \parallel \alpha^{-1} \parallel <1 .$ It follows that $\parallel \alpha^{-1}S \parallel \leq \parallel \alpha^{-1} \parallel \parallel S \parallel <1,$ so $\alpha I - S = \alpha (I- \alpha^{-1}S ) $ is invertible in $B^{a}(H_{\mathcal{A}}) .$	
	
	Next, suppose that $(\alpha I - S)(x)=0 $ for some $\alpha \in \mathcal{A}  $ and $x \in H_{\mathcal{A}} .$ This gives the following system of equations coordinatewise: 
	$\alpha x_{1}=0, \alpha x_{2}-x_{1}=0, \alpha x_{3}-x_{2}=0, \cdots .  $
	Since $\alpha x_{1}=0 ,$ we deduce that ${x_{1}}_{\mid \text{supp } \alpha}=0 .$ However, since $ \alpha x_{2}-x_{1}=0 ,$ it follows that ${x_{1}}_{\mid (\text{supp } \alpha)^{c}}=0  $ also. Hence $x_{1}=0 .$ But then $\alpha x_{2}=0 $ and $\alpha x_{3}-x_{2}=0 .$ Using the same argument we obtain that $x_{2}=0 .$ Proceding inductively we obtain that $x_{k}=0 $ for all $k,$ so $x=0 .$ Since $\alpha \in \mathcal{A}  $ was arbitrary chosen, we conclude that $\sigma_{p}^{\mathcal{A}}(S)=   \varnothing .$\\
	Case 2: In this case we consider $\mathcal{A}=L^{\infty}((0,1)) .$ Let $\alpha \in \mathcal{A} $ and assume that inf $\vert \alpha \vert <1 .$ This means that $\mu(\vert \alpha \vert^{-1}((0,1-\epsilon))>0     ) .$  Set $M_{\epsilon}=\vert \alpha \vert^{-1}((0,1-\epsilon)) , $ there exists on $ \epsilon \in (0,1-inf \vert \alpha \vert)$ such that $\chi_{M_{\epsilon}} \neq 0 .$ Letting $\chi_{M_{\epsilon}} $ play the role of the function $g$ in the previous proof, we deduce by the same arguments that $ \sigma^{\mathcal{A}}(S) = \lbrace \alpha \in \mathcal{A} \mid \inf \vert \alpha \vert \leq 1 \rbrace  .$ Next, assume that $(\alpha I - S ) (x)=0 $ for some $\alpha \in \mathcal{A} $ and $x \in H_{\mathcal{A}} .$ As in the previous proof we get the system of equations $ \alpha x_{1}=0, \alpha x_{2}-x_{1}=0, \alpha x_{3}-x_{2}=0, \cdots . $ The first equation gives that $x_{1}=0  $ a.e. on $\vert \alpha \vert^{-1} (0,\infty)$ whereas the second equation gives $ x_{1}=0 $ a.e. on $\alpha^{-1}(\lbrace 0 \rbrace) .$ Hence $x_{1}=0 .$ Proceding inductively as in the previous proof we get $x=0,$ hence $\sigma_{p}^{\mathcal{A}}(S)  $ is empty also in this case. 
\end{proof}
\begin{lemma}  \label{06l 18} %\underline{06l 18}
	Let $T \in B(H) $ and suppose that $T$ is invertible. Then the equation $(TI-S)x=y $ has a solution in $H_{\mathcal{A}} $ for all $e_{k}, k \in \mathbb{N} $ if and only if the sequence $(T^{-1},T^{-2}, \cdots ,T^{-k}, \cdots) $ is in $H_{\mathcal{A}}  .$ 
\end{lemma}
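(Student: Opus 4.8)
The plan is to solve the equation $(TI-S)x=e_k$ recursively, coordinate by coordinate, using invertibility of $T$ to see that the solution is uniquely determined whenever it exists, and then to recognize that the resulting sequence is, up to a finite shift of indices, precisely $(T^{-1},T^{-2},\dots)$.

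First I would record the coordinatewise form of the operator: since $Se_j=e_{j+1}$ we have $S(x_1,x_2,\dots)=(0,x_1,x_2,\dots)$, so
\[
(TI-S)(x_1,x_2,\dots)=(Tx_1,\;Tx_2-x_1,\;Tx_3-x_2,\dots).
\]
Setting this equal to $e_1=(1,0,0,\dots)$ gives $Tx_1=1$ and $Tx_n=x_{n-1}$ for $n\ge 2$; since $T$ is invertible the only possibility is $x_1=T^{-1}$ and, inductively, $x_n=T^{-1}x_{n-1}=T^{-n}$. Thus the equation $(TI-S)x=e_1$ admits a solution in $H_{\mathcal A}$ if and only if $(T^{-1},T^{-2},\dots)\in H_{\mathcal A}$, i.e.\ if and only if $\sum_{n}(T^{*})^{-n}T^{-n}$ converges in $\mathcal A$; and in that case the solution is unique.

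Next I would treat a general $k\ge 2$. The equation $(TI-S)x=e_k$ forces $Tx_1=0$, hence $x_1=0$; then $Tx_j=x_{j-1}$ for $2\le j\le k-1$ forces $x_2=\dots=x_{k-1}=0$; then $Tx_k=1$ gives $x_k=T^{-1}$, and $Tx_{k+j}=x_{k+j-1}$ gives $x_{k+j}=T^{-(j+1)}$ for $j\ge 0$. So the (again unique) candidate solution is the sequence $(T^{-1},T^{-2},\dots)$ with $k-1$ zeros prepended. Prepending finitely many zeros does not change membership in $l_2(\mathcal A)$, so this candidate lies in $H_{\mathcal A}$ exactly when $(T^{-1},T^{-2},\dots)$ does. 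Conversely, if $(T^{-1},T^{-2},\dots)\in H_{\mathcal A}$, then for each $k$ that shifted sequence is in $H_{\mathcal A}$ and, by the coordinatewise identity above (together with boundedness of $TI$ and of $S$), it indeed solves $(TI-S)x=e_k$.

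Putting the two directions together yields the equivalence: $(TI-S)x=e_k$ is solvable in $H_{\mathcal A}$ for every $k\in\mathbb N$ if and only if $(T^{-1},T^{-2},\dots)\in H_{\mathcal A}$; in fact solvability for the single index $k=1$ already suffices. I do not anticipate a genuine obstacle here, since the argument is elementary, but the point that needs care is the uniqueness in the recursion, which is what drives the ``only if'' implication: without invertibility of $T$ the coordinates would not be pinned down, and one must keep in mind that what is being characterized is membership of the formal solution in $H_{\mathcal A}$, not the mere existence of an algebraic solution.
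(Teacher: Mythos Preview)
Your proof is correct and follows essentially the same approach as the paper: solve $(TI-S)x=e_1$ coordinatewise to force $x_n=T^{-n}$, and for $e_k$ with $k\ge 2$ obtain the same sequence shifted by $k-1$ zeros. The paper's argument is identical in substance, only slightly more terse; your extra remarks on uniqueness and on $k=1$ already being sufficient are valid embellishments but not departures from the method.
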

\begin{proof}
	For $k=1 ,$ if $(TI-S)x=e_{1} $ we must have $TB_{1}=I $ where $x=(B_{1}, B_{2}, \cdots) .$ Hence $ B_{1}=T^{-1}.$ Next $TB_{2}-B_{1}=0 ,$ so $TB_{2}=B_{1}=T^{-1} $ which gives $B_{2}=T^{-2} .$ Proceeding inductively, we obtain that $B_{k}=T^{-k} $ for all $k.$ So the equation $(TI-S)x=e_{1} $ has a solution in $H_{\mathcal{A}}  $ if and only if the sequence $T^{-1},T^{-2}, \cdots $ belongs to $H_{\mathcal{A}} .$ 
	
	Now, if $T^{-1},T^{-2}, \cdots \in H_{\mathcal{A}}$ then the sequence $x^{(k)} $ in $H_{\mathcal{A}}  $ given by\\
	$x_{n}^{(k)}=\left\{\begin{matrix}
	0&  \text{ if }   n \in \lbrace 1, \cdots , k-1 \rbrace  \\ 
	T^{-(n-k+1)}&  \text{ for } n \in \lbrace k,k+1, \cdots \rbrace 
	\end{matrix}\right. $ \\ 
	is the solution of the equation $(TI-S)x=e_{k} $ for each $k \in \mathbb{N} .$
\end{proof}

For each $ T \in B(H)$ let $N_{T} $ denote the set of all right annihilators of $T.$ For each right invertible $T,$ let $B_{T} $ denote right inverse of $T$ and $W$ be the set of all right invertible $T \in B(H) $ such that there exists a non zero sequence $\lbrace Y_{n} \rbrace \subseteq N_{T} $ with the property that the sequence $\lbrace Y_{1},B_{T}Y_{1}+Y_{2}, B_{T}^{2}Y_{1}+B_{T}Y_{2}+Y_{3}, \cdots \rbrace$ belongs to $l_{2}(B(H)) .$

In addition, let $Z$ be the set of all right invertible $T$ such that there is \underline{no} sequence $\lbrace Y_{n} \rbrace $ in $N_{T} $ with the property that the sequence $\lbrace B_{T}+Y_{1},B_{T}^{2}+B_{T}Y_{1}+Y_{2}, B_{T}^{3}+B_{T}^{2}Y_{1}+B_{T}Y_{2}+Y_{3}, \cdots \rbrace $ belongs to $l_{2}(B(H)) .$ Then we have the following proposition.
\begin{proposition} \label{09p 08} %\underline{{09p 08}}
	Let $\mathcal{A}=B(H) $ and $S$ be the unilateral shift on $ H_{\mathcal{A}}.$ Then $\sigma^{\mathcal{A}}(S)=Z \cup W \cup \lbrace T \in B(H) \vert T $ is not right invertible$\rbrace.$
\end{proposition}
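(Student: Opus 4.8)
The plan is to identify, for each $T \in B(H)$ viewed as an element of $\mathcal{A} = B(H)$, exactly when $TI - S$ fails to be invertible in $B^{a}(H_{\mathcal{A}})$, and to see that the three sets $Z$, $W$, and $\{T : T \text{ not right invertible}\}$ together exhaust precisely these $T$. First I would record the coordinatewise form of the equations $(TI-S)x = y$: writing $x = (x_1, x_2, \dots)$, the equation $(TI-S)x = y$ reads $Tx_1 = y_1$ and $Tx_{n} - x_{n-1} = y_n$ for $n \geq 2$, i.e. $x_{n-1} = Tx_n - y_n$. From this one sees that $TI - S$ is injective iff there is no nonzero $x \in H_{\mathcal{A}}$ with $Tx_1 = 0$ and $x_{n-1} = Tx_n$ for all $n \geq 2$; and it is surjective iff every $y \in H_{\mathcal{A}}$ admits such a solution. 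The key structural observation (generalizing Lemma~\ref{06l 18}) is that if $T$ is not right invertible, then $Tx_1 = e_1$ (say) has no solution already at the first coordinate for a suitable target, so $TI - S$ is not surjective, hence not invertible; this handles the third set.

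Next I would treat right invertible $T$. Fix a right inverse $B_T$ and describe the general solution of $(TI-S)x = e_1$: back-substituting, $x_n$ must satisfy $Tx_n = x_{n-1}$ with $x_0 := e_1$, so the solutions of $Tx_1 = e_1$ are $x_1 = B_T + Y_1$ with $Y_1 \in N_T$ a right annihilator, then $Tx_2 = x_1$ forces $x_2 = B_T x_1 + Y_2 = B_T^2 + B_T Y_1 + Y_2$, and inductively $x_n = B_T^n + B_T^{n-1}Y_1 + \cdots + B_T Y_{n-1} + Y_n$. Thus $(TI-S)x = e_1$ has a solution in $H_{\mathcal{A}}$ iff some such sequence lies in $l_2(B(H))$ — which is exactly the negation of the defining condition for $Z$. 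So $T \in Z$ (among right invertible $T$) iff $TI - S$ is not surjective. Symmetrically, $TI - S$ is not injective iff there is a nonzero $x$ with $Tx_1 = 0$ and $x_n = Tx_{n+1}$... wait — re-reading the homogeneous recursion $x_{n-1} = Tx_n$ with $Tx_1 = 0$: here $x_1 \in N_T$, $x_2$ must satisfy $Tx_2 = x_1$ so $x_2 = B_T x_1 + Y_2$, etc., giving $x_n = B_T^{n-1} Y_1 + B_T^{n-2} Y_2 + \cdots + Y_n$; a nonzero such sequence in $H_{\mathcal{A}}$ is precisely the condition defining $W$. So $T \in W$ iff $\ker(TI - S) \neq \{0\}$.

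Assembling: for right invertible $T$, $TI - S$ is not invertible iff it is non-injective or non-surjective iff $T \in W \cup Z$; and for non right invertible $T$ it is automatically not invertible. It remains to check the one subtlety that distinguishes the module setting from the Hilbert space case: I must verify that when $TI - S$ is injective \emph{and} surjective it is actually invertible in $B^{a}(H_{\mathcal{A}})$, i.e. that the algebraic inverse is bounded and adjointable. Boundedness follows from the open mapping theorem for Banach $\mathcal{A}$-modules (a bijective bounded adjointable operator on $H_{\mathcal{A}}$ has bounded inverse), and adjointability of the inverse of an invertible adjointable operator is standard; alternatively one notes $TI - S$ bounded below and surjective gives closed range $= H_{\mathcal{A}}$ and invokes the complementation facts quoted in the introduction. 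I expect this last point — ensuring that set-theoretic bijectivity of $TI - S$ really yields invertibility \emph{in the $C^{*}$-algebra $B^{a}(H_{\mathcal{A}})$}, together with correctly matching the somewhat intricate index-bookkeeping in the definitions of $W$ and $Z$ to the solvability conditions above — to be the main obstacle; the coordinatewise recursions themselves are routine once set up carefully.
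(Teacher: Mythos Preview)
Your approach mirrors the paper's almost exactly: the paper also sets up the coordinate recursion, observes (via Lemma~\ref{06l 18}--style calculations) that solvability of $(TI-S)x=e_k$ forces $T$ to be right invertible and to lie in $B(H)\setminus Z$, then characterizes the kernel coordinatewise to obtain the $W$ condition, and closes with ``The proposition follows.'' The paper is in fact terser than you are and does not mention the open-mapping/adjointability point you raise at the end.

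One step you assert, and which the paper also passes over, deserves more care: you write ``$T \in Z$ (among right invertible $T$) iff $TI - S$ is not surjective.'' What your recursion actually establishes is that $T \in Z$ iff $e_1 \notin \mathrm{Im}(TI-S)$. The forward implication is immediate; for the converse you need that once $e_1$ (hence each $e_k$, by the shift you describe) lies in the image, $TI-S$ is surjective onto all of $H_{\mathcal{A}}$. Density of the image follows, but surjectivity would require either closed range or an explicit construction for arbitrary $y=(y_1,y_2,\dots)$, and the natural candidate $x_n=\sum_{k\le n} z_{n-k+1}\,y_k$ (with $(TI-S)z=e_1$) is a convolution that need not land in $l_2(B(H))$ from $z,y\in l_2(B(H))$ alone. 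This, rather than the open-mapping step you flag, is where the argument is thin; the paper's proof leaves it equally implicit.
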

\begin{proof}
	If $(TI-S)x=y $ should have a solution for each $e_{k} ,$ then, by the similar calculations as in the proof of Lemma \ref{06l 18} we must have that $T$ is right invertible and belongs to $B(H) \setminus Z .$ Now, suppose that $T$ is right invertible and that $(TS-I)y=0 $ for some $y=\lbrace  Y_{1},Y_{2}, \cdots \rbrace \in H_{\mathcal{A}} .$ We get coordinatewise the system of equations $TY_{1}=0,TY_{2}=Y_{1},TY_{3}=Y_{2}, \cdots .$ It is easily seen that $(TS-I)y=0 $ has a non trivial solution if and only if $T$ belongs to $W.$ The proposition follows.
\end{proof}
\begin{corollary} \label{09c 03} %\underline{{09c 03}}
	Let $\mathcal{A}$ be a commutative unital $C^{*}$-algebra. Then $\sigma^{\mathcal{A}}(S)=\mathcal{A} \setminus G(\mathcal{A}) \cup \lbrace \alpha \in G(\mathcal{A}) \vert (\alpha^{-1}, \alpha^{-2}, \cdots , \alpha^{-k}, \cdots ) \notin H_{\mathcal{A}} \rbrace .$ 
\end{corollary}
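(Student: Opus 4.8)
The plan is to partition $\mathcal{A}$ into three disjoint classes --- elements outside $G(\mathcal{A})$, elements $\alpha\in G(\mathcal{A})$ with $(\alpha^{-1},\alpha^{-2},\cdots)\notin H_{\mathcal{A}}$, and elements $\alpha\in G(\mathcal{A})$ with $(\alpha^{-1},\alpha^{-2},\cdots)\in H_{\mathcal{A}}$ --- and to show that $\alpha I-S$ is non-invertible in $B^{a}(H_{\mathcal{A}})$ exactly on the first two of them (note that $S-\alpha I$ is invertible if and only if $\alpha I-S$ is). This is the analogue for commutative $\mathcal{A}$ of Proposition \ref{09p 08}, and one could in principle re-run that argument: when $\mathcal{A}$ is commutative, ``right invertible'' coincides with ``invertible'', so $N_{\alpha}=\{0\}$ for $\alpha\in G(\mathcal{A})$, which makes the set $W$ empty and collapses $Z$ to $\{\alpha\in G(\mathcal{A})\mid(\alpha^{-1},\alpha^{-2},\cdots)\notin H_{\mathcal{A}}\}$. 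I will instead give the following direct argument.

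First I would dispose of the two classes that should lie in $\sigma^{\mathcal{A}}(S)$, in both cases by showing $e_{1}\notin Im(\alpha I-S)$. The first coordinate of $(\alpha I-S)x$ is $\alpha x_{1}$, so $(\alpha I-S)x=e_{1}$ would require $\alpha x_{1}=1$ for some $x_{1}\in\mathcal{A}$, which --- as $\mathcal{A}$ is commutative --- is possible only if $\alpha\in G(\mathcal{A})$. Hence if $\alpha\notin G(\mathcal{A})$ then $\alpha I-S$ is not surjective, so not invertible, so $\alpha\in\sigma^{\mathcal{A}}(S)$. If $\alpha\in G(\mathcal{A})$, then solving $(\alpha I-S)x=e_{1}$ coordinatewise, exactly as in the proof of Lemma \ref{06l 18} with $\alpha$ in the role of $T$, forces $x_{n}=\alpha^{-n}$ for every $n$; so $e_{1}\in Im(\alpha I-S)$ precisely when $(\alpha^{-1},\alpha^{-2},\cdots)\in H_{\mathcal{A}}$. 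Therefore, if $\alpha\in G(\mathcal{A})$ but $(\alpha^{-1},\alpha^{-2},\cdots)\notin H_{\mathcal{A}}$, then $\alpha I-S$ is not surjective and $\alpha\in\sigma^{\mathcal{A}}(S)$. Together these two steps give $\bigl(\mathcal{A}\setminus G(\mathcal{A})\bigr)\cup\{\alpha\in G(\mathcal{A})\mid(\alpha^{-1},\alpha^{-2},\cdots)\notin H_{\mathcal{A}}\}\subseteq\sigma^{\mathcal{A}}(S)$.

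For the reverse inclusion I would take $\alpha\in G(\mathcal{A})$ with $(\alpha^{-1},\alpha^{-2},\cdots)\in H_{\mathcal{A}}$ and show $\alpha I-S$ is invertible in $B^{a}(H_{\mathcal{A}})$. The step that really uses commutativity, and which I expect to be the crux, is the claim that this hypothesis forces $\|\alpha^{-1}\|<1$. To see it, represent $\mathcal{A}$ as $C(X)$ with $X$ the (compact) maximal ideal space; then $(\alpha^{-1},\alpha^{-2},\cdots)\in l_{2}(\mathcal{A})$ says that $\sum_{n\geq 1}|\alpha|^{-2n}$ converges in $C(X)$, hence uniformly, hence pointwise on $X$, and pointwise convergence of $\sum_{n}|\alpha(\omega)|^{-2n}$ forces $|\alpha(\omega)|>1$; by compactness $\min_{X}|\alpha|>1$, so $\|\alpha^{-1}\|=(\min_{X}|\alpha|)^{-1}<1$. (A representation-free variant: in a commutative $C^{*}$-algebra $\sum_{n}(\alpha^{-n})^{*}\alpha^{-n}=\sum_{n}(\alpha^{*}\alpha)^{-n}$ converges if and only if the positive element $(\alpha^{*}\alpha)^{-1}$ has norm $<1$, and $\|(\alpha^{*}\alpha)^{-1}\|=\|\alpha^{-1}\|^{2}$.) Once $\|\alpha^{-1}\|<1$ is established, $\|(\alpha^{-1}I)S\|\leq\|\alpha^{-1}I\|\,\|S\|=\|\alpha^{-1}\|<1$, so $I-(\alpha^{-1}I)S$ is invertible in the $C^{*}$-algebra $B^{a}(H_{\mathcal{A}})$ by the Neumann series; since $\alpha I$ is invertible there with inverse $\alpha^{-1}I$, so is $\alpha I-S=(\alpha I)\bigl(I-(\alpha^{-1}I)S\bigr)$, i.e.\ $\alpha\notin\sigma^{\mathcal{A}}(S)$. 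Combined with the previous paragraph this yields the asserted equality, and it also explains why the commutative case reduces to the ``$\inf|\alpha|>1$'' description already seen in Proposition \ref{09p 01}.
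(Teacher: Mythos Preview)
Your proof is correct, and in fact your opening paragraph already \emph{is} the paper's proof: the paper merely observes that in a commutative $\mathcal{A}$ right invertibility coincides with invertibility, so one can ``apply the arguments from the proof of Lemma~\ref{06l 18}'' (equivalently, specialize Proposition~\ref{09p 08}: $N_{\alpha}=\{0\}$ kills $W$, and $Z$ collapses to $\{\alpha\in G(\mathcal{A})\mid(\alpha^{-1},\alpha^{-2},\dots)\notin H_{\mathcal{A}}\}$). That is the entire argument given in the paper.

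Where you genuinely go beyond the paper is in the reverse inclusion. The paper's route via Lemma~\ref{06l 18} only shows that each $e_{k}$ has a preimage under $\alpha I-S$ when $(\alpha^{-1},\alpha^{-2},\dots)\in H_{\mathcal{A}}$; it does not by itself establish surjectivity onto all of $H_{\mathcal{A}}$, let alone bounded invertibility. Your additional step --- that in a commutative $C^{*}$-algebra the membership $(\alpha^{-1},\alpha^{-2},\dots)\in H_{\mathcal{A}}$ forces $\|\alpha^{-1}\|<1$ (since $\|\,|\alpha|^{-2n}\|=\|\alpha^{-1}\|^{2n}$ must tend to $0$), and hence $\alpha I-S=(\alpha I)(I-\alpha^{-1}S)$ is invertible by a Neumann series --- actually closes this gap. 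So your ``direct'' argument is not just an alternative but a strengthening: it supplies the missing boundedness/surjectivity that the paper leaves implicit, and it makes transparent why the commutative case reduces to the ``$\inf|\alpha|>1$'' criterion of Proposition~\ref{09p 01}.
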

\begin{proof}
	Since $\mathcal{A}$ is commutative, then the set of right invertible elemnts is $G(\mathcal{A}) .$ Hence we can apply the arguments from the proof of Lemma \ref{06l 18}.
\end{proof}
\begin{corollary} \label{09c 04} %\underline{{09c 04}}
	Let $\mathcal{A}$ be a unital $C^{*}$-algebra. If $1_{\mathcal{A}} $ denotes the unit in $\mathcal{A} ,$ then $1_{\mathcal{A}} \in \sigma^{\mathcal{A}}(S) .$ 
\end{corollary}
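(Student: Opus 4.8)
\textbf{Proof proposal for Corollary \ref{09c 04}.}

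The plan is to show that $1_{\mathcal{A}} I - S$ fails to be invertible by producing a nonzero element of $H_{\mathcal{A}}$ orthogonal to its image, exactly in the spirit of the argument used in Proposition \ref{09p 01}. With $\alpha = 1_{\mathcal{A}}$ we have $(\,1_{\mathcal{A}} I - S)e_k = e_k - e_{k+1}$, so the candidate orthogonal vector is the formal sequence $(1_{\mathcal{A}}, 1_{\mathcal{A}}, 1_{\mathcal{A}}, \dots)$, which is unfortunately \emph{not} in $l_2(\mathcal{A})$. So a direct copy of the Proposition \ref{09p 01} computation does not work, and that is the main obstacle: the naive annihilator leaves the module.

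The cleaner route is to invoke Corollary \ref{09c 03} after reducing to the commutative case, or, more directly, to argue as follows. Suppose toward a contradiction that $1_{\mathcal{A}} I - S$ is invertible in $B^{a}(H_{\mathcal{A}})$; then in particular the equation $(1_{\mathcal{A}} I - S)x = e_1$ has a solution $x = (B_1, B_2, \dots) \in H_{\mathcal{A}}$. By the coordinatewise recursion (as in the proof of Lemma \ref{06l 18} with $T = 1_{\mathcal{A}}$) we get $B_1 = 1_{\mathcal{A}}$, $B_2 = 1_{\mathcal{A}}$, and inductively $B_k = 1_{\mathcal{A}}$ for all $k$. But then $\langle x, x \rangle = \sum_{k} 1_{\mathcal{A}}$ does not converge in $\mathcal{A}$, so $x \notin H_{\mathcal{A}} = l_2(\mathcal{A})$, a contradiction. (Here one uses that $\|1_{\mathcal{A}}\| = 1 \neq 0$, so the partial sums of $\langle x, x\rangle$ are not Cauchy.) Hence $1_{\mathcal{A}} I - S$ is not surjective, so $1_{\mathcal{A}} \in \sigma^{\mathcal{A}}(S)$.

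I would present the second argument as the proof, since it works verbatim for an arbitrary unital $C^{*}$-algebra and does not need commutativity; the only subtlety to state carefully is why $(1_{\mathcal{A}}, 1_{\mathcal{A}}, \dots) \notin l_2(\mathcal{A})$, namely that convergence of $\sum_k \langle x_k, x_k\rangle$ in a $C^{*}$-algebra forces $\|\langle x_k, x_k\rangle\| = \|x_k\|^2 \to 0$, which fails when every $x_k = 1_{\mathcal{A}}$. Alternatively, one can simply remark that $1_{\mathcal{A}}$ is invertible in $\mathcal{A}$ with $1_{\mathcal{A}}^{-k} = 1_{\mathcal{A}}$, so $(1_{\mathcal{A}}^{-1}, 1_{\mathcal{A}}^{-2}, \dots) = (1_{\mathcal{A}}, 1_{\mathcal{A}}, \dots) \notin H_{\mathcal{A}}$, and then the conclusion is immediate from Lemma \ref{06l 18} together with the observation (as in Proposition \ref{09p 08}) that solvability of $(TI - S)x = e_k$ for every $k$ is necessary for invertibility of $TI - S$.
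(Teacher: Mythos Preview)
Your proposal is correct and takes essentially the same approach as the paper: the paper's proof simply observes that $(1_{\mathcal{A}}^{-1},1_{\mathcal{A}}^{-2},\dots)=(1_{\mathcal{A}},1_{\mathcal{A}},\dots)\notin H_{\mathcal{A}}$ and then invokes the recursion argument from the proof of Lemma~\ref{06l 18}, which is exactly your second (and your ``alternative'') argument. Your write-up is in fact more careful than the paper's, since you spell out why the constant sequence fails to lie in $l_2(\mathcal{A})$ and why this forces non-surjectivity of $1_{\mathcal{A}}I-S$.
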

\begin{proof}
	We have obviously that the sequence $(1_{\mathcal{A}} ,1_{\mathcal{A}}, 1_{\mathcal{A}}, \cdots )=(1_{\mathcal{A}}^{-1}, 1_{\mathcal{A}}^{-2}, 1_{\mathcal{A}}^{-3}, \cdots) $ is not an element of $H_{\mathcal{A}} .$ Then apply the arguments from the proof of Lemma \ref{06l 18}.
\end{proof} 
\begin{example} \label{09e 15} %\underline{{09e 15}}
	We may also consider a weighted shift $S_{w} $ on $H_{\mathcal{A}} $ given by $S_{w}(x)_{j+1}=w_{j}x_{j} $ where $w=(w_{1},w_{2},\cdots)$ is a bounded sequence in $\mathcal{A} .$ In this case, if $\alpha $ has a common right annihilator as $w_{j} $ for some $j \in \mathbb{N} ,$ then the sequence having this right annihilator in its $j $-th coordinate and $0$ elsewhere belongs to the kernel of $\alpha I - S_{w} .$ Hence $\alpha \in \sigma^{\mathcal{A}}(S_{w}) $ in this case.
\end{example}
\begin{remark}
	Notice that Proposition \ref{09p 08} can be generalized to arbitrary unital $C^{*}$-algebras.
\end{remark} 
\begin{example} \label{09e 07} %\underline{{09e 07}}
	 Let $\mathcal{A}=L^{\infty} ((0,1)).$  Set $\tilde{S}(f_{1},f_{2}, \cdots)=(f_{1}\chi_{(0,\frac{1}{2})},f_{2}\chi_{(0,\frac{1}{2})}+f_{1}\chi_{(\frac{1}{2},1)},f_{3}\chi_{(0,\frac{1}{2})}+f_{2}\chi_{(\frac{1}{2},1)}, \cdots).  $ Then $\tilde{S} $ has the matrix 
	 $\left\lbrack
	 \begin{array}{ll}
	 1 & 0 \\
	 0& S \\
	 \end{array}
	 \right \rbrack
	 $
	 w.r.t. the decomposition $(H_{\mathcal{A}} \cdot \chi_{(0,\frac{1}{2})}) \oplus (H_{\mathcal{A}} \cdot \chi_{(\frac{1}{2},1)} ) .$ It follows that $\sigma^{\mathcal{A}}(\tilde{S})=\lbrace \alpha \in \mathcal{A} \mid \inf \vert\alpha \vert \cdot  \chi_{(\frac{1}{2},1)} \leq 1 \rbrace $  $ \cup \lbrace \alpha \in \mathcal{A} \mid \mu (\lbrace t \mid t \in (0,\frac{1}{2}) \text{ and } \alpha (t)=1 \rbrace ) >0 \rbrace .$
\end{example}

\begin{proposition} \label{09p 02} %\underline{{09p 02}}
	Let $\alpha \in \mathcal{A} .$ We have\\
	1. If $\alpha I-F$ is bounded below, and $ F \in B^{a}(H_{\mathcal{A}})$ then $\alpha \in \sigma_{rl}^{ \mathcal{A}}(F) $ if and only if $\alpha^{*} \in \sigma_{p}^{ \mathcal{A}}(F^{*}).$ \\
	2. If $F, D \in B^{a}(H_{\mathcal{A}})$ and $D=U^{*}FU $ for some unitary operator $U,$ then $\sigma^{ \mathcal{A}}(F)=\sigma^{ \mathcal{A}}(D),\sigma_{p}^{ \mathcal{A}}(F)=\sigma_{p}^{ \mathcal{A}}(D),\sigma_{cl}^{ \mathcal{A}}(F)=\sigma_{cl}^{ \mathcal{A}}(D) $ and $\sigma_{rl}^{ \mathcal{A}}(F)=\sigma_{rl}^{ \mathcal{A}}(D) .$
\end{proposition}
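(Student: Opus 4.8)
For part~1 the plan is to reduce non-surjectivity of $F-\alpha I$ to nontriviality of $\ker(F^{*}-\alpha^{*}I)$ by means of the range--kernel decomposition recalled in the introduction. I would first note that $\alpha I-F$ is bounded below precisely when $F-\alpha I$ is, and that a bounded below adjointable operator on the complete module $H_{\mathcal{A}}$ has closed range; hence under the hypothesis $\mathrm{Im}(F-\alpha I)$ is closed, so that $H_{\mathcal{A}}=\mathrm{Im}(F-\alpha I)\oplus\ker((F-\alpha I)^{*})$, with $(F-\alpha I)^{*}=F^{*}-\alpha^{*}I$ since $(\alpha I)^{*}=\alpha^{*}I$. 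Under the standing hypothesis, $\alpha\in\sigma_{rl}^{\mathcal{A}}(F)$ holds exactly when $F-\alpha I$ is not surjective, i.e.\ when $\mathrm{Im}(F-\alpha I)\subsetneq H_{\mathcal{A}}$, which by the decomposition is equivalent to $\ker(F^{*}-\alpha^{*}I)\neq\{0\}$, that is, to $\alpha^{*}\in\sigma_{p}^{\mathcal{A}}(F^{*})$. I would spell this out in both directions, but there is no real difficulty.

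For part~2 the natural strategy is conjugation: establish $D-\alpha I=U^{*}(F-\alpha I)U$ and then transport each of the four spectral conditions across the bijective isometry $U$. Granting this identity, $\ker(D-\alpha I)=U^{*}\bigl(\ker(F-\alpha I)\bigr)$ and $\mathrm{Im}(D-\alpha I)=U^{*}\bigl(\mathrm{Im}(F-\alpha I)\bigr)$; since $U^{*}$ is a bijective isometry, hence a homeomorphism fixing $\{0\}$ and $H_{\mathcal{A}}$ and preserving closedness, the operator $D-\alpha I$ is injective, bounded below, surjective, or has closed range exactly when $F-\alpha I$ does, and is invertible in $B^{a}(H_{\mathcal{A}})$ exactly when $F-\alpha I$ is, with inverse $U^{*}(F-\alpha I)^{-1}U$. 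Feeding these equivalences into the definitions of $\sigma^{\mathcal{A}}$, $\sigma_{p}^{\mathcal{A}}$, $\sigma_{rl}^{\mathcal{A}}$ and $\sigma_{cl}^{\mathcal{A}}$ yields the four claimed equalities simultaneously.

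The step I expect to be the main obstacle is the identity $U^{*}\alpha I\,U=\alpha I$ underlying the conjugation in part~2, i.e.\ the assertion that the unitary $U$ commutes with left multiplication by $\alpha$. Writing $U=(u_{jk})$ in matrix form over $\mathcal{A}$, this amounts to $u_{jk}\alpha=\alpha u_{jk}$ for all $j,k$, which is automatic when $\mathcal{A}$ is commutative and, more generally, when $U$ is a bimodule map (commutes with the left $\mathcal{A}$-action). I would therefore make sure this is in force in the intended applications, adding it to the hypotheses if necessary, since for a noncommutative $\mathcal{A}$ and a unitary built from left multiplications the conclusion of part~2 can fail otherwise. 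Once this commutation is available, the remainder of part~2 is the routine transfer argument familiar from the Hilbert space setting, and I foresee no further difficulty.
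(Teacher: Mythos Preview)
Your treatment of part~1 is essentially the paper's own: both arguments observe that bounded-below gives closed range, then identify $\mathrm{Im}(F-\alpha I)^{\perp}$ with $\ker(F^{*}-\alpha^{*}I)$ and read off the equivalence in both directions.

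For part~2 the paper's proof is the single sentence ``It is straightforward to prove the statement in 2,'' so there is nothing to compare against beyond your conjugation argument, which is the only natural one. Your concern about the identity $U^{*}(\alpha I)U=\alpha I$ is not excessive caution but a genuine obstruction: for noncommutative $\mathcal{A}$ the statement as written can fail. Take $\mathcal{A}=M_{2}(\mathbb{C})$, let $\beta=\left(\begin{smallmatrix}1&0\\0&0\end{smallmatrix}\right)$, $u=\left(\begin{smallmatrix}0&1\\1&0\end{smallmatrix}\right)$, and set $F=\beta I$, $U=uI$. Then $U$ is unitary in $B^{a}(H_{\mathcal{A}})$ and $D=U^{*}FU=(u^{*}\beta u)I=\left(\begin{smallmatrix}0&0\\0&1\end{smallmatrix}\right)I$. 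For $\alpha=\left(\begin{smallmatrix}1&0\\0&2\end{smallmatrix}\right)$ one has $\beta-\alpha=\left(\begin{smallmatrix}0&0\\0&-2\end{smallmatrix}\right)$, so $\alpha\in\sigma_{p}^{\mathcal{A}}(F)$, while $u^{*}\beta u-\alpha=-1$, so $\alpha\notin\sigma^{\mathcal{A}}(D)$. Thus $\sigma^{\mathcal{A}}(F)\neq\sigma^{\mathcal{A}}(D)$. Your suggested remedy---assume $\mathcal{A}$ commutative, or that $U$ commutes with the left $\mathcal{A}$-action---is exactly what is needed; under either hypothesis $D-\alpha I=U^{*}(F-\alpha I)U$ holds and your transfer argument goes through without change. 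So your proposal is correct once that hypothesis is made explicit, and you have in fact located a gap that the paper's one-line dismissal does not address.
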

\begin{proof}
	1) Suppose first that $ F- \alpha I $ is bounded below and $\alpha \in \sigma_{rl}^{\mathcal{A}}(F) .$ Then $Im ( F- \alpha I) $ is closed and $Im ( F- \alpha I)^{\perp} \neq \lbrace 0 \rbrace . $ Since Im $ Im ( F- \alpha I)^{\perp} = \ker ( F^{*}- \alpha^{*} I),$ it follows that $ \alpha^{*} \in \sigma_{p}^{\mathcal{A}}(F^{*}).$ Conversely, suppose that $\alpha^{*} \in \sigma_{p}^{\mathcal{A}}(F^{*}) $ and $F- \alpha I $ is bounded below. Then, again Im $Im (F- \alpha I) $ is closed and moreover $Im (F- \alpha I)^{\perp}=\ker ( F^{*}- \alpha^{*} I) \neq \lbrace 0 \rbrace .$ It follows that $\alpha \in \sigma_{rl}^{\mathcal{A}}(F) .$
	It is straightforward to prove the statement in 2.
\end{proof}	
\begin{proposition}  \label{09p 03} %\underline{{09p 03}}
	Let $ U \in B^{a}(H_{\mathcal{A}})  $ be unitary. Then $\sigma^{\mathcal{A}} (U) \subseteq \lbrace \alpha \in \mathcal{A} \mid \parallel \alpha \parallel \geq 1 \rbrace  $ and $\sigma^{\mathcal{A}} (U) \cap G(\mathcal{A})  \subseteq \lbrace \alpha \in G(\mathcal{A}) \mid \parallel \alpha^{-1} \parallel, \parallel \alpha \parallel \geq 1 \rbrace .$
\end{proposition}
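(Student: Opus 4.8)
The plan is to prove both inclusions by the standard Neumann‑series factorization, being careful about the non‑commutativity of $\mathcal{A}$ (so that the ``small'' operator is multiplied on the correct side). Recall from the introduction that for $\alpha\in\mathcal{A}$ we have $\|\alpha I\|=\|\alpha\|$, $(\alpha I)^{*}=\alpha^{*}I$, and if $\alpha\in G(\mathcal{A})$ then $\alpha I$ is invertible in $B^{a}(H_{\mathcal{A}})$ with $(\alpha I)^{-1}=\alpha^{-1}I$ (since $\alpha\alpha^{-1}=\alpha^{-1}\alpha=1_{\mathcal{A}}$). Also $U$ unitary means $U$ is invertible with $U^{-1}=U^{*}$ and $\|U\|=\|U^{*}\|=1$.

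\emph{Proof of the first inclusion.} I would take $\alpha\in\mathcal{A}$ with $\|\alpha\|<1$ and show $\alpha\notin\sigma^{\mathcal{A}}(U)$. Using $UU^{*}=I$ one writes
\[
U-\alpha I \;=\; U\bigl(I-U^{*}\alpha I\bigr).
\]
Since $\|U^{*}\alpha I\|\le\|U^{*}\|\,\|\alpha I\|=\|\alpha\|<1$, the operator $I-U^{*}\alpha I$ is invertible in $B^{a}(H_{\mathcal{A}})$ by the Neumann series, and $U$ is invertible; hence $U-\alpha I$ is invertible, so $\alpha\notin\sigma^{\mathcal{A}}(U)$. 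This gives $\sigma^{\mathcal{A}}(U)\subseteq\{\alpha\in\mathcal{A}\mid\|\alpha\|\ge1\}$.

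\emph{Proof of the second inclusion.} Now suppose $\alpha\in G(\mathcal{A})$ with $\|\alpha^{-1}\|<1$; I claim $\alpha\notin\sigma^{\mathcal{A}}(U)$. Factor out $\alpha I$ on the left:
\[
U-\alpha I \;=\; -\,\alpha I\bigl(I-\alpha^{-1}I\,U\bigr),
\]
which is valid because $\alpha I\cdot\alpha^{-1}I=I$. Since $\|\alpha^{-1}I\,U\|\le\|\alpha^{-1}\|\,\|U\|=\|\alpha^{-1}\|<1$, the factor $I-\alpha^{-1}I\,U$ is invertible; as $\alpha I$ is invertible, so is $U-\alpha I$, giving $\alpha\notin\sigma^{\mathcal{A}}(U)$. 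Combining this with the first inclusion (note $\sigma^{\mathcal{A}}(U)\cap G(\mathcal{A})\subseteq\sigma^{\mathcal{A}}(U)\subseteq\{\|\alpha\|\ge1\}$), we conclude that any $\alpha\in\sigma^{\mathcal{A}}(U)\cap G(\mathcal{A})$ satisfies both $\|\alpha\|\ge1$ and $\|\alpha^{-1}\|\ge1$, which is the desired statement.

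There is essentially no deep obstacle here; the only point requiring care is choosing the factorizations so that the operator of norm $<1$ sits on the correct side of $\alpha I$ (respectively $U$), since $\mathcal{A}$ need not be commutative and $\alpha I$ need not commute with $U$. Everything else is the ordinary Neumann‑series argument, using only the norm, adjoint and inverse formulas for $\alpha I$ recorded in the introduction.
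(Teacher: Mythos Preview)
Your proof is correct and follows essentially the same approach as the paper: the paper's one-line proof records the factorization $\alpha I - U=(\alpha U^{*}-I)U$ together with $\|U\|=\|U^{*}\|=1$, which is (up to a sign and the side on which one factors) precisely your Neumann-series argument for the first inclusion, and your second factorization is the natural dual that the paper leaves implicit. Your version is in fact more complete, since you spell out both factorizations and flag the non-commutativity issue that forces the ``small'' factor to sit on the correct side.
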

\begin{proof}
	 We have $\alpha I - V=(\alpha V^{*}-I)V $ and $ \parallel V^{*} \parallel =  \parallel V \parallel = 1 .   $ 
\end{proof}
Consider again the orthonormal basis $\lbrace e_{k} \rbrace_{k \in \mathbb{N}} $ for $H_{\mathcal{A}} .$ We may enumerate this basis by indexes in $ \mathbb{Z}.$ Then we get orthonormal basis $\lbrace e_{j} \rbrace_{j \in \mathbb{Z}} $ for $H_{\mathcal{A}} $ and we can consider bilateral shift operator $V$ w.r.t. this basis i.e. $Ve_{k}=e_{k+1}$ all $k \in \mathbb{Z} ,$ which gives $V^{*}e_{k}=e_{k-1} $ for all $k \in \mathbb{Z} .$ 
\begin{proposition} \label{09p 04} %\underline{{09p 04}}
	Let $V$ be bilateral shift operator. Then the following holds\\
	1)	If $\mathcal{A}= C([0,1]),$ then $ \sigma^{\mathcal{A}} (V)=\lbrace f \in \mathcal{A} \mid \vert f \vert ([0,1]) \cap \lbrace 1 \rbrace \neq \varnothing \rbrace$  \\
	2)	If $\mathcal{A} =L^{\infty} ([0,1]) ,$ then $ \sigma^{\mathcal{A}} (V)=\lbrace f \in \mathcal{A} \mid \mu (\vert f \vert^{-1} ((1-\epsilon,1+\epsilon))>0 \text{  } \forall \epsilon >0 \rbrace . $ In both cases $\sigma_{p}^{\mathcal{A}}(V)= \varnothing .$
\end{proposition}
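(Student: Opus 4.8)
The plan is to establish the two inclusions that determine $\sigma^{\mathcal{A}}(V)$ and then, separately, the emptiness of $\sigma_{p}^{\mathcal{A}}(V)$; the two algebras are treated by one scheme, the only difference being ``value at a point'' (for $C([0,1])$) versus ``essential value on a set of positive measure'' (for $L^{\infty}([0,1])$). Throughout one uses that $V$ is unitary and that $V$ commutes with every operator $\alpha I$ ($\alpha\in\mathcal{A}$) and with every projection $\chi_{E}I$ ($E$ Borel); in particular $V$ restricts to the bilateral shift on each submodule $H_{\mathcal{A}}\chi_{E}$.

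First I would dispose of the complement of the claimed set, i.e. show that $fI-V$ is invertible whenever $1$ is not in the range of $\vert f\vert$ (case $C([0,1])$), respectively not in the essential range of $\vert f\vert$ (case $L^{\infty}([0,1])$). In the $L^{\infty}$ case choose $\epsilon_{0}>0$ with $\mu(\vert f\vert^{-1}(1-\epsilon_{0},1+\epsilon_{0}))=0$ and set $A=\{\vert f\vert\le 1-\epsilon_{0}\}$, $B=\{\vert f\vert\ge 1+\epsilon_{0}\}$, so $(0,1)=A\sqcup B$ up to a null set; in the $C([0,1])$ case $\vert f\vert([0,1])$ is an interval missing $1$, so either $\Vert f\Vert<1$ (take $A=[0,1]$, $B=\varnothing$) or $\min\vert f\vert>1$ (take $B=[0,1]$, $A=\varnothing$). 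Then $H_{\mathcal{A}}=H_{\mathcal{A}}\chi_{A}\oplus H_{\mathcal{A}}\chi_{B}$ is an orthogonal decomposition into submodules that $V$ and $fI$ both respect; on $H_{\mathcal{A}}\chi_{A}$ one has $\Vert f\chi_{A}\Vert<1$, hence $fI-V=-V\bigl(I-(f\chi_{A})V^{*}\bigr)$ is invertible by a Neumann series; on $H_{\mathcal{A}}\chi_{B}$ the element $f\chi_{B}$ is invertible with $\Vert(f\chi_{B})^{-1}\Vert<1$, hence $fI-V=(f\chi_{B})\bigl(I-(f\chi_{B})^{-1}V\bigr)$ is invertible. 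Thus $fI-V$ is invertible on both summands, so $f\notin\sigma^{\mathcal{A}}(V)$.

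The main work is the reverse inclusion: if $1$ lies in the (essential) range of $\vert f\vert$, then $fI-V$ is not bounded below --- hence, being adjointable, not invertible --- and I would prove this by producing, for each prescribed $\eta>0$, a unit vector $h$ with $\Vert(fI-V)h\Vert\le\eta$, via a truncated approximate eigenvector. First localize so that $f$ may be replaced by a unimodular constant up to error $\eta$: when $\mathcal{A}=C([0,1])$, pick $t_{0}$ with $\vert f(t_{0})\vert=1$, put $\lambda_{0}=f(t_{0})/\vert f(t_{0})\vert$, use continuity to get a nondegenerate interval $F\ni t_{0}$ with $\vert f-\lambda_{0}\vert<\eta$ on $F$, and take a bump function $g$ with $0\le g\le1$, $\Vert g\Vert_{\infty}=1$, $\operatorname{supp}g\subseteq F$; when $\mathcal{A}=L^{\infty}([0,1])$, for small $\epsilon,\delta$ the set $\vert f\vert^{-1}(1-\epsilon,1+\epsilon)$ has positive measure, so covering the annulus $\{\,1-\epsilon<\vert z\vert<1+\epsilon\,\}$ by finitely many sets of diameter $<\delta$ and pulling back yields a positive-measure $F$ on which $f$ is within $\delta$ of a constant $\lambda$ with $\vert\lambda\vert$ close to $1$, so with $\lambda_{0}=\lambda/\vert\lambda\vert$ and $g=\chi_{F}$ one has $\vert f-\lambda_{0}\vert<\eta$ on $F$ (shrink $\epsilon,\delta$). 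In either case set $h^{(n)}_{j}=g\,\overline{\lambda_{0}}^{\,j}$ for $\vert j\vert\le n$ and $h^{(n)}_{j}=0$ otherwise, so that $\langle h^{(n)},h^{(n)}\rangle=(2n+1)g^{2}$ and $\Vert h^{(n)}\Vert=\sqrt{2n+1}$. Computing $(fI-V)h^{(n)}$ coordinatewise, the $2n$ interior coordinates ($-n<j\le n$) each equal $g\,\overline{\lambda_{0}}^{\,j-1}(f\overline{\lambda_{0}}-1)$ with $\vert f\overline{\lambda_{0}}-1\vert=\vert f-\lambda_{0}\vert<\eta$ on $F$, and only two boundary coordinates ($j=-n$ and $j=n+1$) survive, of modulus $\le(1+\eta)g$ and $g$; hence $\langle(fI-V)h^{(n)},(fI-V)h^{(n)}\rangle\le\bigl(2n\eta^{2}+(1+\eta)^{2}+1\bigr)g^{2}$ and $\Vert(fI-V)h^{(n)}\Vert^{2}/\Vert h^{(n)}\Vert^{2}\le\bigl(2n\eta^{2}+(1+\eta)^{2}+1\bigr)/(2n+1)\to\eta^{2}$. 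Taking $n$ large and normalizing gives the desired $h$; since $\eta$ was arbitrary, $fI-V$ is not bounded below, so $f\in\sigma^{\mathcal{A}}(V)$. I expect this construction to be the main obstacle --- the nonscalar nature of $f$ forces the localization step (trivial near a point, a finite-partition argument for $L^{\infty}$), and one must see that the $2n$ small interior errors accumulate only to $2n\eta^{2}$ while the two uncontrolled boundary terms stay $O(1)$.

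Finally, $\sigma_{p}^{\mathcal{A}}(V)=\varnothing$: if $(fI-V)x=0$ with $x=(x_{j})_{j\in\mathbb{Z}}\in H_{\mathcal{A}}$, then $f x_{j}=x_{j-1}$ for all $j\in\mathbb{Z}$. Working pointwise --- everywhere when $\mathcal{A}=C([0,1])$, almost everywhere when $\mathcal{A}=L^{\infty}([0,1])$ --- and using that $\sum_{j}\vert x_{j}(t)\vert^{2}\le\Vert\langle x,x\rangle\Vert<\infty$, one gets: where $f(t)=0$ the relation forces $x_{j}(t)=0$ for all $j$; where $f(t)\ne0$ one has $x_{j}(t)=f(t)^{-j}x_{0}(t)$, and since $\sum_{j\in\mathbb{Z}}\vert f(t)\vert^{-2j}=\sum_{j\ge0}\vert f(t)\vert^{-2j}+\sum_{j\ge1}\vert f(t)\vert^{2j}=\infty$ (the two geometric series cannot both converge), summability forces $x_{0}(t)=0$ and hence $x_{j}(t)=0$ for all $j$. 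Thus $x=0$, so $\ker(fI-V)=\{0\}$ for every $f\in\mathcal{A}$, which is the assertion. This last part is routine.
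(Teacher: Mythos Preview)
Your proof is correct, and for the ``easy'' direction (invertibility when $1$ is not in the (essential) range of $|f|$, via the decomposition $H_{\mathcal{A}}=H_{\mathcal{A}}\chi_{A}\oplus H_{\mathcal{A}}\chi_{B}$ and Neumann series on each piece) and for $\sigma_{p}^{\mathcal{A}}(V)=\varnothing$ (pointwise\slash a.e.\ analysis of the recursion $fx_{j}=x_{j-1}$) it coincides with the paper's argument.

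The genuine difference is in the ``hard'' inclusion. You show that $fI-V$ is \emph{not bounded below} by constructing truncated approximate eigenvectors $h^{(n)}$ localized on a set where $f$ is $\eta$-close to a unimodular constant $\lambda_{0}$, and carefully separating the $2n$ small interior terms from the two $O(1)$ boundary terms. The paper instead shows that $fI-V$ is \emph{not surjective}: for $\mathcal{A}=C([0,1])$ it evaluates at a point $\tilde{t}$ with $|f(\tilde{t})|=1$ to reduce to the classical scalar bilateral shift on $\ell_{2}$ (for which $1\in\sigma(\tilde{V})$ means $e_{1}$ is not in the image), while for $\mathcal{A}=L^{\infty}$ it assumes $(\alpha I-V)x=e_{0}$ and, working on the sets $M_{\epsilon}^{\pm}=|\alpha|^{-1}((1-\epsilon,1))$ and $|\alpha|^{-1}((1,1+\epsilon))$, derives $|x_{k}|\ge(1\pm\epsilon)^{-(k+1)}$ on one of these sets, forcing $\sum x_{k}^{*}x_{k}$ to diverge. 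Your approach has the advantage of handling both algebras by a single scheme (the localization step replacing point evaluation), and it yields the sharper information that $f\in\sigma_{a}^{\mathcal{A}}(V)$; the paper's approach avoids the approximate-eigenvector bookkeeping by exploiting that for $C([0,1])$ one can literally evaluate at a point, and for $L^{\infty}$ it identifies an explicit obstruction to surjectivity rather than an approximate kernel.
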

\begin{proof}
\begin{flushleft}
		Case 1: 
\end{flushleft}
	In this case we consider $\mathcal{A}=C([0,1]) .$ Suppose that $\alpha \in \mathcal{A} $ and $\vert \alpha (\tilde{t})\vert =1 $ for some $\tilde{t} \in [0,1] .$ Choose a function $y \in \mathcal{A} $ such that $y(\tilde{t})=1 .$ If $\alpha I - V $ is surjective, then there exists an $x \in H_{\mathcal{A}} $ such that $(\alpha I - V )x=e_{1} \cdot y $ Now $x(\tilde{t}) \in l_{2} $ since $x \in H_{\mathcal{A}}  .$ It we let $\tilde{V} $ denote the ordinary bilateral shift on $l_{2} $ we get that $\alpha (\tilde{t})x(\tilde{t})-\tilde{V}(x(\tilde{t}))=(1,0,0, \cdots) ,$ as $y(\tilde{t})=1.$ But this is not possible since $\vert \alpha (\tilde{t}) \vert= 1 $ (for more details, see \cite[ Propozicija 19]{KU}. We conclude that $\alpha I -V $ can not be surjective, so $\alpha \in  \sigma^{\mathcal{A}}(V) .$ On the other hand, if $\alpha \in \mathcal{A} $ and $\vert \alpha \vert ([0,1]) \cap \lbrace 1 \rbrace =\varnothing  ,$ then either $\vert \alpha (t) \vert \geq C >1 $ or $\vert \alpha (t) \vert \leq K < 1 $ for all $ t \in [0,1]$ and some constants $C$ and $K$ (here we muse that $\vert \alpha \vert  $  is continuous, hence $\vert \alpha \vert ([0,1]) $ must be connected). If $\vert \alpha (t) \vert \geq C >1 $ for all $t \in [0,1] ,$ then $\alpha $ is invertible in $\mathcal{A}$ and $\parallel   \alpha^{-1} \parallel \leq \dfrac{1}{C} <1.$ Since $\parallel V \parallel=1 $ it follows that $\alpha \notin \sigma^{\mathcal{A}}(V)  $ then. If $\vert \alpha (t) \vert \leq K <1 $ $t \in [0,1] ,$ then $\parallel \alpha \parallel \leq K <1,$ so again using that $\alpha I-V= V(\alpha V^{*}-I)$ it follows that $\alpha \notin \sigma^{\mathcal{A}}(V)  $ then. Hence $ \sigma^{\mathcal{A}}(V) = \lbrace \alpha \in  \mathcal{A} \mid \vert \alpha \vert  ([0,1]) \cap \lbrace 1 \rbrace \neq \varnothing \rbrace  $
	Next, if $(\alpha I - V)(x)=0  $ for some $x \in H_{\mathcal{A}},$ then we must have $\alpha (t)x(t) -\tilde{V}x(t)=0$ for all $t \in [0,1].$ This means that $x(t)=0 $ for all $t \in [0,1] .$
\end{proof}
\begin{flushleft}
	Case 2:
\end{flushleft}
Let now $\mathcal{A}=L^{\infty}((0,1)) $ and $\alpha \in \mathcal{A} $ be such that $\mu (\vert \alpha \vert^{-1}((1-\epsilon,1+\epsilon)))>0 $ for all $\epsilon >0 .$ If $(\alpha I - V)x=e_{0} $ for some $x \in H_{\mathcal{A}} ,$ then we must have $\alpha x_{k}-x_{k-1}=0 $ for all $k \neq 0 $ and $\alpha x_{0}-x_{-1}=1 .$ For small $\epsilon >0 $ set $M_{\epsilon}=\vert \alpha \vert^{-1}((1-\epsilon,1+\epsilon)),$ $M_{\epsilon}^{-}=\vert \alpha \vert^{-1}((1-\epsilon,1)),M_{\epsilon}^{+}=\vert \alpha \vert^{-1}((1,1+\epsilon)), $ so $ M_{\epsilon}=M_{\epsilon}^{-} \cup M_{\epsilon}^{+}, $ $\chi_{M_{\epsilon}} \neq 0 . $ From the equations above we get $x_{k}= \alpha ^{-(k+1)}x_{-1} $ for all $k \leq -1 $ and $x_{k}=\alpha^{-k}x_{0} $ for all $k \geq 1 $ on any subset of $(0,1)$ on which $\alpha $ is bounded below, thus in particular on $M_{\epsilon}.$ It follows that $x_{k}\chi_{M_{\epsilon}^{+}}=0 $ for all $k \leq -1 $ and $x_{k}\chi_{M_{\epsilon}^{-}}=0 $ for all $k \geq 0 .$ Setting this into the second equation above, we get $\alpha x_{0} \chi_{M_{\epsilon}^{+}}-x_{1} \chi_{M_{\epsilon}^{-}}=\chi_{M_{\epsilon}} $ which gives $x_{0}=\alpha^{-1}\chi_{M_{\epsilon}^{+}} $ and $x_{1}=-\chi_{M_{\epsilon}^{-}} .$ Hence $x_{k}=\alpha^{-(k+1)} \chi_{M_{\epsilon}^{+}} $ for all $k \geq 0 $ and $x_{k}=- \alpha^{-(k+1)} \chi_{M_{\epsilon}^{-}} $ for all $k \leq -1.$ This gives $\vert x_{k}  \vert \geq (1+\epsilon)^{-(k+1)}\chi_{M_{\epsilon}^{+}}$ for all $k \geq 0 $ and $\vert x_{k}  \vert \geq (1-\epsilon)^{-(k+1)}\chi_{M_{\epsilon}^{-}} $ for all $k \leq -1 .$ Since this holds for all $\epsilon >0 $ and moreover, we have that either $\chi_{M_{\epsilon}^{-}} $ or $\chi_{M_{\epsilon}^{+}} $is non zero (because $\chi_{M_{\epsilon}} $ for all $\epsilon >0 $),  we get that the infinite sum $\sum_{k \in \mathbb{Z}} x_{k}^{*}x_{k} $ diverge in $\mathcal{A} ,$ so $x$ can not be an element of $H_{\mathcal{A}} .$ We conclude that $e_{0} \notin Im(\alpha I - V) ,$  so $\alpha \in \sigma^{\mathcal{A}}(V)  $ in this case. 

On the other hand, if $\mu (\vert \alpha \vert^{-1}((1-\epsilon,1+\epsilon)))=0 $ for some $\alpha \in \mathcal{A} $ and some $\epsilon >0, $ then $(0,1)=N_{\epsilon}^{-} \cup N_{\epsilon}^{+} $ where $N_{\epsilon}^{-}= \vert \alpha \vert^{-1}((0,1-\epsilon))$ and $N_{\epsilon}^{+}= \vert \alpha \vert^{-1}((1+\epsilon,\parallel \alpha \parallel)) .$ Since the submodules $H_{\mathcal{A}}\cdot \chi_{N_{\epsilon}^{-}}  $ and $H_{\mathcal{A}}\cdot \chi_{N_{\epsilon}^{+}}  $ clearly reduce the operator $V$ and the restrictions of $\alpha I-V $ on both these submodules are invertible, it follows that $\alpha I-V  $ is invertible, so $\alpha \notin \sigma^{\mathcal{A}}(V) .$
\begin{example} \label{09e 08} %\underline{{09e 08}}
	Let $\lbrace \alpha_{1}, \alpha_{2}, \cdots \rbrace$ be a sequence in a unital $C^{*}$-algebra $\mathcal{A}$ s.t. each $\alpha_{k} $ is a unitary element of $\mathcal{A}.$ Then the operator $V$ defined by $V(x_{1},x_{2}, \cdots)=(\alpha_{1} x_{1}, \alpha_{2} x_{2} , \cdots)$ is a unitary operator on $H_{\mathcal{A}} .$ We have $\sigma^{\mathcal{A}} (V)=\lbrace \beta \in \mathcal{A} \mid \beta-\alpha_{k}  $ is not right invertible in $\mathcal{A} $ or that $(\beta -\alpha_{k}) \gamma_{k}=0$ for some $\gamma_{k} \in \mathcal{A} \rbrace  .$ If $\mathcal{A}=C([0,1]) \text{ or if } \mathcal{A}=L^{\infty}((0,1))$ and $J_{1},J_{2} $ are two closed subintervals of $(0,1) $ such that $J_{1} \cap J_{2}=\varnothing,$ then we may easily find a function $\beta \in \mathcal{A} $ such that $\beta = \alpha_{1} $ on $J_{1} $ and $\vert \beta (t) \vert >1 $ for all $t \in J_{2} .$ Hence $\parallel \beta \parallel >1 ,$ but we also have $ \beta \in \sigma^{\mathcal{A}}(V) .$ Similarly, if $\mathcal{A}=B(H) $ where $H$ is a Hilbert space, then we may easily find two closed suspaces $H_{1} $ and $H_{2} $ such that $H_{1} \perp H_{2} $ and $T \in B(H) $ is such that $T_{\mid_{H_{1}}} = {\alpha_{1}}_{\mid_{H_{1}}} $ and $\parallel T_{\mid_{H_{2}}} \parallel >1 .$ Hence again $T \in  \sigma^{\mathcal{A}}(V)$ and $\parallel T \parallel >1 .$ So, if $V$ is a unitary operator on $H_{\mathcal{A}} ,$ we do \underline{not} have in general that $ \sigma^{\mathcal{A}}(V) \subseteq  \lbrace \alpha \in \mathcal{A} \mid \parallel \alpha \parallel=1 \rbrace .$ 
\end{example}
\begin{lemma} \label{09l 01} %\underline{{09l 01}}
	If $F$ is a self-adjoint operator on $H_{\mathcal{A}} ,$ then $\sigma_{p}^{\mathcal{A}}(F) $ is a self-adjoint subset of $\mathcal{A} ,$ that is $\alpha \in \sigma_{p}^{\mathcal{A}}(F) $ if and only if $\alpha^{*} \in \sigma_{p}^{\mathcal{A}}(F) $ in the case when $\mathcal{A} $ is a commutative $C^{*}$-algebra.
\end{lemma}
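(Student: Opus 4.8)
The plan is to extract from self-adjointness of $F$ the identity $(\alpha-\alpha^{*})\langle x,x\rangle=0$ for every $x\in\ker(F-\alpha I)$, and then, after identifying the commutative unital $C^{*}$-algebra $\mathcal{A}$ with $C(X)$ for a compact Hausdorff space $X$ via the Gelfand transform (the case $\mathcal{A}=L^{\infty}((0,1))$ being included, since it too is a commutative unital $C^{*}$-algebra), to deduce that this forces $\alpha I x=\alpha^{*}I x$, so that the \emph{same} nonzero vector $x$ witnesses $\alpha^{*}\in\sigma_{p}^{\mathcal{A}}(F)$.

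First I would fix $\alpha\in\sigma_{p}^{\mathcal{A}}(F)$ and a nonzero $x\in H_{\mathcal{A}}$ with $Fx=\alpha I x$. Pairing this equation with $x$ and using $F=F^{*}$ together with the fact, recalled in the introduction, that $(\alpha I)^{*}=\alpha^{*}I$, one computes (with the inner product written so as to be $\mathcal{A}$-linear in the appropriate variable) $\alpha^{*}\langle x,x\rangle=\langle \alpha I x,x\rangle=\langle Fx,x\rangle=\langle x,Fx\rangle=\langle x,\alpha I x\rangle=\alpha\langle x,x\rangle$, where the commutativity of $\mathcal{A}$ is exactly what lets $\alpha$ (resp. $\alpha^{*}$) be moved across $\langle x,x\rangle=\sum_{i}x_{i}^{*}x_{i}$. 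Hence $(\alpha-\alpha^{*})\langle x,x\rangle=0$ in $\mathcal{A}$, a conclusion independent of the chosen sesquilinearity convention.

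Next, writing $\mathcal{A}=C(X)$, the element $\langle x,x\rangle$ is the continuous nonnegative function $t\mapsto\sum_{i}|x_{i}(t)|^{2}$ (the series converging uniformly since $x\in l_{2}(\mathcal{A})$), and the identity just obtained says that for every $t\in X$ either $\alpha(t)\in\mathbb{R}$ or $x_{i}(t)=0$ for all $i$. In either case $(\alpha(t)-\overline{\alpha(t)})\,x_{i}(t)=0$, so $(\alpha-\alpha^{*})x_{i}=0$ in $C(X)$ for every $i$; that is, $\alpha I x=\alpha^{*}I x$ as elements of $H_{\mathcal{A}}$. Therefore $(F-\alpha^{*}I)x=Fx-\alpha I x=(F-\alpha I)x=0$ with $x\neq 0$, so $\alpha^{*}\in\sigma_{p}^{\mathcal{A}}(F)$. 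The reverse implication follows by applying the same argument with $\alpha^{*}$ in place of $\alpha$.

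I expect the one genuinely delicate point to be the passage from $(\alpha-\alpha^{*})\langle x,x\rangle=0$ to $(\alpha-\alpha^{*})x_{i}=0$: this is where commutativity is indispensable (vanishing of a product of ``functions'' forces vanishing on the common support, or up to a null set in the $L^{\infty}$ case), and it pinpoints why the hypothesis on $\mathcal{A}$ cannot be dropped. The inner-product computation in the first step is routine, the only care needed being bookkeeping of the module-inner-product conventions.
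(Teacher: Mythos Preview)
Your argument is correct, but it follows a different path from the paper's. The paper observes in one line that, because $\mathcal{A}$ is commutative, the operators $F-\alpha I$ and $(F-\alpha I)^{*}=F-\alpha^{*}I$ commute, i.e.\ $F-\alpha I$ is normal in $B^{a}(H_{\mathcal{A}})$; the standard computation $\langle (F-\alpha I)x,(F-\alpha I)x\rangle=\langle (F-\alpha^{*}I)x,(F-\alpha^{*}I)x\rangle$ then gives $\|(F-\alpha I)x\|=\|(F-\alpha^{*}I)x\|$ for \emph{all} $x$, so the two kernels coincide. Your route---pairing $Fx=\alpha x$ with $x$ to obtain $(\alpha-\alpha^{*})\langle x,x\rangle=0$, then invoking the Gelfand realisation $\mathcal{A}\cong C(X)$ to upgrade this to $(\alpha-\alpha^{*})x_{i}=0$ coordinatewise---reaches the same conclusion (indeed the same eigenvector works), but is more hands-on and uses extra structure (the concrete function-algebra picture) that the paper's normality argument bypasses entirely. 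Both proofs exploit commutativity at essentially the same juncture (making $\alpha I$ and $\alpha^{*}I$ commute, or equivalently moving $\alpha$ past $x_{i}^{*}x_{i}$), so your diagnosis of where the hypothesis is used is on target; the paper's version is simply shorter and stays at the operator level without descending to points of the spectrum.
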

\begin{proof}
	 Since $F-\alpha I $ and $F-\alpha^{*} I=F^{*}-\alpha^{*} I$ mutually commute, we can deduce that $\parallel (F-\alpha I)x \parallel = \parallel (F-\alpha^{*} I) x \parallel$ for all $x \in H_{\mathcal{A}} .$ 
\end{proof}
\begin{example} \label{09e 09} %\underline{{09e 09}}
	Let $\mathcal{A}=C([0,1]) $ or $\mathcal{A}=L^{\infty}((0,1)) .$ If $G$ is the operator on $H_{\mathcal{A}} $ given by $ G( f_{1},f_{2},\cdots )=(g_{1} f_{1},g_{2}f_{2},\cdots ),$ where $\lbrace g_{1},g_{2},\cdots \rbrace $ is a bounded sequence of real valued functions in $\mathcal{A} ,$ then $G$ is a self-adjoint operator. Suppose that there are two mutually disjoint, closed subintervals $J_{1} $ and $J_{2} $ of $(0,1)$ such that ${g_{1}}_{\mid_{J_{1}}} \neq 0$ and ${g_{1}}_{\mid_{J_{2}}} = 0 .$ Set $\tilde{g} = ig_{1} .$ Then, if we choose a function $f$ in $\mathcal{A}$ such that supp $f \subseteq J_{2} ,$ we get that $(\tilde{g}I-G)(f,0,0, \cdots)=0 .$ However $\tilde{g} \neq \overline{\tilde{g}} ,$ so we do \underline{not} have that $\sigma_{p}^{\mathcal{A}}(G) $ is included in the set of self-adjoint elements of $\mathcal{A}.$
\end{example}
\begin{example} \label{09e 10} %\underline{{09e 10}}
	Let $\mathcal{A}=B(H)$ where $H$ is a Hilbert space and let $\lbrace e_{j} \rbrace_{j \in \mathbb{N}} $ be an orthonormal basis for $H.$ If $P$ denotes the orthogonal projection onto  $Span \lbrace e_{1} \rbrace ,$ then the operator $P \cdot I$ is a self-adjoint operator on $H_{\mathcal{A}} .$ Now, if $S$ is the unilateral shift operator on $H$ w.r.t. to the orthonormal basis $\lbrace e_{j}  \rbrace,$ then $S-P$ is injective whereas $S^{*}-P$ is \underline{not} injective because $(S^{*}-P)(e_{1}+e_{2})=0.$ It follows that $(S-P)\cdot I$ is an injective operator whereas $(S^{*}-P)\cdot I=((S-P)\cdot I)^{*} $ is \textbf{not} an injective operator. Hence, if $\mathcal{A}=B(H) $ where $H$ is a Hilbert space, we do not have in general that $ \sigma_{p}^{\mathcal{A}}(F)$ is a self-adjoint subset of $\mathcal{A}$ when $F=F^{*} .$
\end{example}

\begin{lemma}\label{09l 02} %\underline{{09l 02}}
	Let $\mathcal{A}$ be a commutative $C^{*}$-algebra. If $F$ is a self-adjoint operator on $H_{\mathcal{A}} $ and $\alpha \in \mathcal{A} \setminus \sigma_{p}^{\mathcal{A}}(F)  ,$ then $\overline{R(F-\alpha I)}^{\perp}=\lbrace 0 \rbrace .$ Hence, if $\alpha \in \mathcal{A} \setminus \sigma_{p}^{\mathcal{A}}(F)  $ and in addition $F-\alpha I $ is bounded below, then $\alpha \in \mathcal{A} \setminus \sigma^{\mathcal{A}}(F) .$ 
\end{lemma}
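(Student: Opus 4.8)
The plan is to reduce the first assertion to Lemma~\ref{09l 01} via the standard identity relating the orthogonal complement of a range to the kernel of the adjoint, and then to obtain the second assertion from the orthogonal decomposition recalled in the introduction.

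First I would observe that for any $F \in B^{a}(H_{\mathcal{A}})$ and $\alpha \in \mathcal{A}$ one has $\overline{R(F-\alpha I)}^{\perp} = R(F-\alpha I)^{\perp} = \ker\big((F-\alpha I)^{*}\big) = \ker(F^{*}-\alpha^{*}I)$, since $y \perp R(F-\alpha I)$ is equivalent to $\langle (F^{*}-\alpha^{*}I)y, x\rangle = 0$ for all $x \in H_{\mathcal{A}}$. Because $F$ is self-adjoint, $F^{*}-\alpha^{*}I = F - \alpha^{*}I$, so $\overline{R(F-\alpha I)}^{\perp} = \ker(F-\alpha^{*}I)$. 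Now, since $\mathcal{A}$ is commutative, Lemma~\ref{09l 01} applies and $\sigma_{p}^{\mathcal{A}}(F)$ is a self-adjoint subset of $\mathcal{A}$; hence $\alpha \notin \sigma_{p}^{\mathcal{A}}(F)$ forces $\alpha^{*} \notin \sigma_{p}^{\mathcal{A}}(F)$, i.e. $\ker(F-\alpha^{*}I) = \{0\}$. This yields $\overline{R(F-\alpha I)}^{\perp} = \{0\}$.

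For the second assertion, suppose in addition that $F-\alpha I$ is bounded below. Then $R(F-\alpha I)$ is closed in $H_{\mathcal{A}}$, so by the decomposition recalled in the introduction, $H_{\mathcal{A}} = R(F-\alpha I) \oplus \ker(F^{*}-\alpha^{*}I) = R(F-\alpha I) \oplus \overline{R(F-\alpha I)}^{\perp}$. By the first part the second summand is $\{0\}$, hence $R(F-\alpha I) = H_{\mathcal{A}}$, i.e. $F-\alpha I$ is surjective. Being also bounded below, $F-\alpha I$ is then a bijection of $H_{\mathcal{A}}$ with bounded inverse, and this inverse is again adjointable; therefore $F-\alpha I$ is invertible in $B^{a}(H_{\mathcal{A}})$, that is, $\alpha \notin \sigma^{\mathcal{A}}(F)$.

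The point to be careful about — and essentially the only real subtlety — is that in a Hilbert $C^{*}$-module the condition $\overline{R(F-\alpha I)}^{\perp} = \{0\}$ does \emph{not} by itself force $R(F-\alpha I)$ to be dense, since closed submodules need not be orthogonally complemented; this is exactly why the hypothesis ``bounded below'' is needed, so that the range is closed and the orthogonal decomposition from the introduction becomes available. Everything else is routine: the range--kernel identity, the fact that a bounded-below adjointable operator has closed range, and the adjointability of the inverse of a bijective adjointable operator.
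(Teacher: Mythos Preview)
Your proof is correct and follows essentially the same route as the paper's own argument: both identify $\overline{R(F-\alpha I)}^{\perp}$ with $\ker(F-\alpha^{*}I)$ and then appeal to Lemma~\ref{09l 01} (the paper phrases this as ``by the arguments above'') to conclude this kernel is trivial, and both finish via the closed-range orthogonal decomposition (the paper cites \cite[Theorem~2.3.3]{MT}, which is precisely the fact recalled in the introduction that you invoke).
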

\begin{proof}
	Suppose now that $ \alpha \in \mathcal{A} \setminus \sigma_{p}^{\mathcal{A}}(F) .$ If $y \in \overline{Im(F-\alpha I)}^{\perp}  ,$ then for all $ x \in H_{\mathcal{A}} $ we have $  \langle (F-\alpha I )x,y \rangle=0.$ This gives $ \langle x,(F-\alpha^{*} I )y \rangle=0. $ for all $ x \in H_{\mathcal{A}}  .$ It follows that $ (F-\alpha^{*} I )y=0 $ in this case. By the arguments above we obtain the $(F-\alpha I )y=0   .$ Since $\alpha \notin \sigma_{p}^{\mathcal{A}}(F)  $ by the choice of $ \alpha ,$ we get that $ y=0 .$ Thus $\overline{Im(F-\alpha I)}^{\perp}= \lbrace 0 \rbrace ,$ when
	$\alpha \in \mathcal{A} \setminus \sigma_{p}^{\mathcal{A}}(F).$ 	
	
	Suppose next that $\alpha \in \mathcal{A}   $ is such that $F-\alpha I  $ is bounded below. Then $\alpha \in \mathcal{A} \setminus \sigma_{p}^{\mathcal{A}}(F)  $ so from the previous arguments we deduce that $Im(F-\alpha I)^{\perp}= \lbrace 0 \rbrace.$ Moreover, since  $Im(F-\alpha I)  $ is then closed and $F-\alpha I  \in B^{a}(H_{\mathcal{A}}),$ from \cite[Theorem 2.3.3]{MT} it follows that $Im(F-\alpha I) $ is orthogonally complementablle in $H_{\mathcal{A}}  .$ But, since $Im(F-\alpha I)^{\perp}= \lbrace 0 \rbrace  ,$ we must have that $ Im(F-\alpha I)=H_{\mathcal{A}}  .$ Hence $ F-\alpha I $ is invertible in $B^{a}(H_{\mathcal{A}})  ,$ so $\alpha$ is in $ \mathcal{A}\setminus \sigma^{\mathcal{A}}(F)  .$
\end{proof}
\begin{corollary} \label{09c 01} %\underline{{09c 01}}
	Let $\mathcal{A}$ be a unital commutative $C^{*}$-algebra and $F$ be a self-adjoint operator on $H_{\mathcal{A}} .$ If $\alpha \in \mathcal{A} $ and $\alpha - \alpha^{*} \in G(\mathcal{A}) ,$ then $F-\alpha I $ is invertible. In this case $\parallel (F-\alpha I)^{-1} \parallel \leq 2 \parallel (\alpha - \alpha^{*})^{-1} \parallel . $ 
\end{corollary}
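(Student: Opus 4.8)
The plan is to adapt the classical Hilbert-space fact that a self-adjoint operator has no spectrum off the real line, paying attention to the $\mathcal{A}$-valued inner product and using commutativity of $\mathcal{A}$ at exactly one point. First I would write $\alpha = a + ib$ with $a = \tfrac12(\alpha + \alpha^{*})$ and $b = \tfrac{1}{2i}(\alpha - \alpha^{*})$ both self-adjoint; the hypothesis $\alpha - \alpha^{*} \in G(\mathcal{A})$ is precisely the statement that $b \in G(\mathcal{A})$, and since $b^{-1}$ is then self-adjoint, $\|b^{-1}\| = 2\|(\alpha - \alpha^{*})^{-1}\|$. Because $\mathcal{A}$ is commutative, on $H_{\mathcal{A}} = l_{2}(\mathcal{A})$ the operator $bI$ coincides with right multiplication by the scalar $b$, that is $bIx = x\cdot b$ for every $x$, and likewise for $aI$; I would also recall from the introduction that $bI$ is adjointable with $(bI)^{*} = bI$, so that $F - aI$ is a self-adjoint element of $B^{a}(H_{\mathcal{A}})$ and all the manipulations below stay inside that algebra.

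The heart of the argument is the identity
\[
\langle (F-\alpha I)x,\,(F-\alpha I)x\rangle \;=\; \langle (F-aI)x,\,(F-aI)x\rangle \;+\; b\langle x,x\rangle b , \qquad x \in H_{\mathcal{A}}.
\]
To obtain it I would set $u = (F-aI)x$ and $v = x\cdot(ib)$, so $(F-\alpha I)x = u - v$, and expand $\langle u-v, u-v\rangle$. The diagonal terms give $\langle u,u\rangle$ and $\langle v,v\rangle = (ib)^{*}\langle x,x\rangle (ib) = b\langle x,x\rangle b$. For the cross terms, note that $s := \langle (F-aI)x, x\rangle$ is self-adjoint because $F-aI$ is, so $\langle u,v\rangle = s(ib) = i\,sb$ and $\langle v,u\rangle = \langle u,v\rangle^{*} = -i\,bs$; hence $-\langle u,v\rangle - \langle v,u\rangle = i(bs - sb)$, which vanishes since $\mathcal{A}$ is commutative. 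This cancellation is the one place commutativity is genuinely used, and it is the step I expect to need the most care, since the bookkeeping with the $\mathcal{A}$-valued inner product is easy to get wrong. As $b\langle x,x\rangle b = b^{*}\langle x,x\rangle b \geq 0$, the identity yields $\langle (F-\alpha I)x,(F-\alpha I)x\rangle \geq b^{*}\langle x,x\rangle b$.

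From here everything is routine, and I would finish as follows. Writing $\langle x,x\rangle = b^{-1}\bigl(b^{*}\langle x,x\rangle b\bigr)b^{-1}$ and taking norms gives $\|x\|^{2} = \|\langle x,x\rangle\| \leq \|b^{-1}\|^{2}\,\|b^{*}\langle x,x\rangle b\| \leq \|b^{-1}\|^{2}\,\|(F-\alpha I)x\|^{2}$, so
\[
\|(F-\alpha I)x\| \;\geq\; \frac{1}{\|b^{-1}\|}\,\|x\| \;=\; \frac{1}{2\|(\alpha-\alpha^{*})^{-1}\|}\,\|x\| \qquad (x \in H_{\mathcal{A}}).
\]
In particular $F-\alpha I$ is bounded below, hence injective, so $\alpha \notin \sigma_{p}^{\mathcal{A}}(F)$; Lemma \ref{09l 02} then gives at once that $F-\alpha I$ is invertible in $B^{a}(H_{\mathcal{A}})$. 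Finally, substituting $x = (F-\alpha I)^{-1}y$ in the last displayed inequality produces $\|(F-\alpha I)^{-1}y\| \leq 2\|(\alpha-\alpha^{*})^{-1}\|\,\|y\|$ for all $y \in H_{\mathcal{A}}$, which is exactly the claimed bound on $\|(F-\alpha I)^{-1}\|$.
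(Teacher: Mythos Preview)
Your argument is correct. Both you and the paper arrive at the same lower bound $\|(F-\alpha I)x\| \geq \frac{1}{2\|(\alpha-\alpha^{*})^{-1}\|}\,\|x\|$ and then invoke Lemma~\ref{09l 02}, but the algebra leading there is different. The paper computes the ``imaginary part'' $\langle (F-\alpha I)x,x\rangle - \langle x,(F-\alpha I)x\rangle = (\alpha-\alpha^{*})\langle x,x\rangle$ directly (commutativity enters to pull $\alpha$ and $\alpha^{*}$ past $\langle x,x\rangle$) and then applies the triangle and Cauchy--Schwarz inequalities to bound $\|(\alpha-\alpha^{*})\langle x,x\rangle\|$ by $2\|x\|\,\|(F-\alpha I)x\|$. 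You instead split $\alpha=a+ib$ and prove the Pythagorean identity $\langle (F-\alpha I)x,(F-\alpha I)x\rangle = \langle (F-aI)x,(F-aI)x\rangle + b\langle x,x\rangle b$, with commutativity used to kill the cross terms. Your route avoids Cauchy--Schwarz and gives the positivity $\langle (F-\alpha I)x,(F-\alpha I)x\rangle \geq b^{*}\langle x,x\rangle b$ at the level of elements of $\mathcal{A}$ rather than just norms, which is a slightly sharper intermediate statement; the paper's route is shorter and needs no preliminary decomposition of $\alpha$. Both are the standard Hilbert-space proofs, transported to the module setting.
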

\begin{proof}
	If $\alpha \in \mathcal{A}  $ and $ \alpha - \alpha^{*} \in G(\mathcal{A}), $ we have $ \langle Fx-\alpha Ix,x \rangle - \langle x, Fx-\alpha Ix \rangle = \langle x,x \rangle \alpha - \alpha^{*} \langle x,x \rangle = (\alpha - \alpha^{*}) \langle x,x \rangle$ (here we use that $\mathcal{A}  $ is commutative). From the triangle inequality and the Cauchy-Schwartz  inequality for the inner product we obtain 
	$ \Vert (\alpha - \alpha^{*}) \langle x,x \rangle \Vert \leq 2 \Vert x \Vert \Vert Fx-\alpha Ix \Vert  .$ Since $ (\alpha - \alpha^{*}) $ is invertible by assumption, we get that $ \Vert (\alpha - \alpha^{*})   \Vert  \langle x,x \rangle \geq \frac{1}{\Vert (\alpha - \alpha^{*})^{-1} \Vert} \Vert \langle x,x \rangle \Vert .$ It follows that $ \Vert (F-\alpha I)(x)  \Vert \geq \frac{1}{2\Vert (\alpha - \alpha^{*})^{-1} \Vert} \Vert \langle x,x \rangle \Vert $ for all $x \in H_{\mathcal{A}}  .$
\end{proof}

\begin{remark}
	Let $\mathcal{A}=C([0,1]) \text{ or } \mathcal{A}=L^{\infty}((0,1)) .$ As we have seen in Example \ref{09e 09}  the operator $\tilde{g}I-G $ is not invertible, whereas $\tilde{g} - \overline{\tilde{g}} =1ig_{1} \neq 0.$ Therefore, the requirement that $a-a^{*}$ is invertible is indeed necessary in Corollary \ref{09c 01}.
\end{remark}
\begin{example} \label{09e 14} %\underline{{09e 14}}
	Let $\mathcal{A}=M_{2}(\mathbb{C})$ and $ T_{1},T_{2} \in \mathcal{A}$ be given as 
	$T_{1}=\left\lbrack
	\begin{array}{ll}
	2 & 1 \\
	1& 0 \\
	\end{array}
	\right \rbrack
	$ , 
	$T_{2}=\left\lbrack
	\begin{array}{ll}
	0 & i \\
	i& i \\
	\end{array}
	\right \rbrack
	.$  
	Then $T_{1}$ is self-adjoint and $T_{2}-T_{2}^{*}=2i $
	$\left\lbrack
	\begin{array}{ll}
	0 & 1 \\
	1& 1 \\
	\end{array}
	\right \rbrack
	,$  
	so $T_{2}-T_{2}^{*}$ is invertible.
	Now $T_{1}-T_{2}=$
	$\left\lbrack
	\begin{array}{ll}
	1 & 1-i \\
	1-i& -i \\
	\end{array}
	\right \rbrack
	,$ 
	so $det(T_{1}-T_{2})=0$  which gives $T_{1}-T_{2}$ is not invertible. Hence the operator $F:=T_{1} \cdot I $ is a self-adjoint operator on $H_{\mathcal{A}},$ but $F-T_{2}\cdot I=(T_{1}-T_{2}) \cdot I $ is \underline{not} invertible. This shows that the assumption that $\mathcal{A} $ is commutative in Corollary \ref{09c 01} is indeed necessary.
\end{example}
For a self-adjoint operator $F$ on $H_{\mathcal{A}}  ,$ set $M(F)=\sup \lbrace \parallel \langle Fx,x \rangle \parallel \mid \parallel x \parallel =1 \rbrace $ and $ m(F)=\inf \lbrace \parallel \langle Fx,x \rangle \parallel \mid \parallel x \parallel =1 \rbrace.$ We have the following corollary.
\begin{corollary} \label{09c 02} %\underline{{09c 02}}
	If $\mathcal{A}=C([0,1]) $ and $F$ is a self-adjoint operator on $H_{\mathcal{A}} ,$ then $\sigma^{\mathcal{A}}(F) \subseteq \lbrace f \in \mathcal{A} \mid \vert f \vert (0,1) \cap [m,M] \neq \varnothing \rbrace .$ If $\mathcal{A}=L^{\infty}((0,1)) $ and $F$ is a self-adjoint operator on $H_{\mathcal{A}} $ then $\sigma^{\mathcal{A}}(F) \subseteq \lbrace f \in \mathcal{A} \mid \mu (\vert f \vert^{-1} [m-\epsilon,M+\epsilon]) > 0 \text{ for some } \epsilon= \epsilon(f) > 0  \rbrace .$ 
\end{corollary}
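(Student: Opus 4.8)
The plan is to argue by contraposition, in the spirit of Lemma~\ref{09l 02}. Given $f\in\mathcal{A}$ lying outside the set on the right-hand side, I will exhibit a constant $c>0$ with $\|(F-fI)x\|\ge c$ for every unit vector $x$; then $F-fI$ is bounded below, hence injective, hence $f\notin\sigma_{p}^{\mathcal{A}}(F)$, and since both $C([0,1])$ and $L^{\infty}((0,1))$ are commutative, Lemma~\ref{09l 02} gives that $F-fI$ is invertible, i.e.\ $f\notin\sigma^{\mathcal{A}}(F)$. For a fixed unit vector $x$ write $a:=\langle Fx,x\rangle$ and $g:=\langle x,x\rangle\ge 0$; then $\|g\|=1$, $m\le\|a\|\le M$ by the very definition of $m(F)$ and $M(F)$, and by Cauchy--Schwarz
\[
\|(F-fI)x\|\;\ge\;\|\langle (F-fI)x,x\rangle\|\;=\;\|a-\bar f g\| ,
\]
so the whole problem reduces to bounding $\|a-\bar f g\|$ below, uniformly in $x$.

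\emph{The case $\mathcal{A}=C([0,1])$.} Since $|f|$ is continuous on the connected set $[0,1]$, the image $|f|([0,1])$ is a closed interval; being disjoint from $[m,M]$, it satisfies either $\max|f|<m$ or $\min|f|>M$. If $\max|f|<m$, then $\|\bar f g\|\le\|f\|\,\|g\|<m$, so $\|a-\bar f g\|\ge\|a\|-\|\bar f g\|\ge m-\|f\|>0$. If $\min|f|>M$, then $|f(t)|g(t)\ge(\min|f|)g(t)$ for all $t$ gives $\|\bar f g\|\ge(\min|f|)\|g\|=\min|f|>M$, whence $\|a-\bar f g\|\ge\|\bar f g\|-\|a\|\ge\min|f|-M>0$. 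In either case $F-fI$ is bounded below, hence invertible by Lemma~\ref{09l 02}, and the contrapositive yields the asserted inclusion.

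\emph{The case $\mathcal{A}=L^{\infty}((0,1))$.} Fix $\epsilon>0$ with $\mu(|f|^{-1}[m-\epsilon,M+\epsilon])=0$, and set $N^{-}=|f|^{-1}[0,m-\epsilon)$ (empty if $m\le\epsilon$) and $N^{+}=|f|^{-1}(M+\epsilon,\infty)$, so that $N^{-}\cup N^{+}=(0,1)$ up to a null set and $\chi_{N^{-}},\chi_{N^{+}}$ are central projections of $\mathcal{A}$. Since $g=g\chi_{N^{-}}+g\chi_{N^{+}}$ and $\|g\|=1$, at least one of $\|g\chi_{N^{-}}\|$, $\|g\chi_{N^{+}}\|$ equals $1$. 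The key point — which I expect to be the main technical difficulty — is that $F$ may be localized to these sets: by $\mathcal{A}$-linearity $F(x\chi_{N^{\pm}})=(Fx)\chi_{N^{\pm}}$, hence $\langle F(x\chi_{N^{\pm}}),x\chi_{N^{\pm}}\rangle=a\,\chi_{N^{\pm}}$, while $\|x\chi_{N^{\pm}}\|^{2}=\|g\chi_{N^{\pm}}\|$. So if $\|g\chi_{N^{-}}\|=1$, then $x\chi_{N^{-}}$ is a unit vector and the definition of $m=m(F)$ forces $\operatorname{ess\,sup}_{N^{-}}|a|=\|a\chi_{N^{-}}\|\ge m$; together with $|\bar f g|=|f|g<m-\epsilon$ a.e.\ on $N^{-}$ this gives
\[
\|a-\bar f g\|\;\ge\;\operatorname{ess\,sup}_{N^{-}}\bigl(|a|-|\bar f g|\bigr)\;\ge\;m-(m-\epsilon)\;=\;\epsilon .
\]
If instead $\|g\chi_{N^{+}}\|=1$, then $\operatorname{ess\,sup}_{N^{+}}\bigl(|f|g\bigr)\ge(M+\epsilon)\|g\chi_{N^{+}}\|=M+\epsilon$ while $|a|\le\|a\|\le M$ a.e., so similarly $\|a-\bar f g\|\ge\operatorname{ess\,sup}_{N^{+}}\bigl(|\bar f g|-|a|\bigr)\ge(M+\epsilon)-M=\epsilon$. (When $m=0$ one has $N^{-}=\varnothing$ and only this second alternative occurs.) Thus $\|(F-fI)x\|\ge\epsilon$ for every unit $x$, so $F-fI$ is bounded below, hence invertible by Lemma~\ref{09l 02}, and the contrapositive yields the inclusion, just as in the first case.
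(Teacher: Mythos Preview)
Your proof is correct and follows essentially the same approach as the paper's: in the $L^{\infty}$ case you split $(0,1)$ into $N^{-}$ and $N^{+}$ via the characteristic projections, use the $\mathcal{A}$-linearity $F(x\chi_{N^{\pm}})=(Fx)\chi_{N^{\pm}}$ to localize $\langle Fx,x\rangle$, invoke the definitions of $m(F)$ and $M(F)$ to bound $\|a\chi_{N^{\pm}}\|$, and then conclude invertibility from ``bounded below'' via Lemma~\ref{09l 02}. The only cosmetic difference is that the paper bounds $\|\langle(F-\alpha I)x,x\rangle\chi_{M_{i}}\|\ge\epsilon\,\|\langle x,x\rangle\chi_{M_{i}}\|$ for \emph{both} pieces and then takes the maximum, whereas you make a case distinction on which of $\|g\chi_{N^{-}}\|,\|g\chi_{N^{+}}\|$ equals $1$; these are equivalent since $\|g\|=\max\{\|g\chi_{N^{-}}\|,\|g\chi_{N^{+}}\|\}$ in $L^{\infty}$.
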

\begin{proof}
	Let $F$ be a self-adjoint operator on $H_{\mathcal{A}} $ where $\mathcal{A}=L^{\infty}((0,1)) $ and let $\alpha \in \mathcal{A} $ be such that there exists an $\epsilon=\epsilon (\alpha) $ with the property that $\mu(\vert f \vert^{-1} ((m- \epsilon , M+\epsilon ))= 0 .$ Then $(0,1)=M_{1}  \cup M_{2}$ where $M_{1} $ and $M_{2} $ are Borel measurable, mutually disjoint subsets of $(0,1)$ satisfying $\vert \alpha \vert \chi_{M_{1}} \geq M+\epsilon $ and $\vert \alpha \vert \chi_{M_{2}} \leq m-\epsilon $ a.e. Then, for all $x \in H_{\mathcal{A}} $ we have $\langle (F - \alpha I ) x,x \rangle = \langle (F - \alpha I ) x,x \rangle \cdot \chi_{M_{1}} + \langle (F - \alpha I ) x,x \rangle \cdot \chi_{M_{2}} $
	
	Now, we have 
	$$ \parallel   \langle (F - \alpha I ) x,x \rangle   \parallel \geq $$ 
	$$ \parallel   \langle (F - \alpha I ) x,x \rangle    \chi_{M_{1}}  \parallel \geq \parallel  \overline{ \alpha } \langle x,x \rangle \chi_{M_{1}} \parallel   - \parallel   \langle Fx \cdot \chi_{1}, x \cdot \chi_{M_{1}}  \rangle \parallel \geq$$ 
	$$\geq(M+\epsilon)\parallel   \langle x,x \rangle    \chi_{M_{1}}  \parallel - 
	\parallel   \langle F(x \cdot  \chi_{M_{1}}) , x \cdot  \chi_{M_{1}} \rangle  \parallel \geq $$
	$$\geq (M+\epsilon)\parallel   \langle x,x \rangle    \chi_{M_{1}}  \parallel - 
	M\parallel   \langle x \cdot  \chi_{M_{1}} , x \cdot \chi_{M_{1}} \rangle   \parallel =$$
	$$= (M+\epsilon)\parallel   \langle x,x \rangle    \chi_{M_{1}}  \parallel - 
	M\parallel   \chi_{M_{1}} \langle x , x  \rangle \chi_{M_{1}}    \parallel =\epsilon \parallel   \langle x , x  \rangle \chi_{M_{1}}    \parallel$$
	(where we have used that 
	$$ \parallel   \langle Fy , y  \rangle = \parallel y \parallel^{2}   \parallel \langle F(\dfrac{y}{\parallel y \parallel} ), \dfrac{y}{\parallel y \parallel} \rangle    \parallel \leq \parallel \langle y,y \rangle    \parallel M ).$$ 
	Next $$\parallel \langle (F - \alpha I)x,x \rangle    \parallel \geq  $$
	
	$$\geq \parallel \langle (F - \alpha I)x,x \rangle  \chi_{M_{2}}   \parallel \geq \parallel \langle Fx,x \rangle  \chi_{M_{2}}   \parallel - \parallel \tilde{\alpha} \langle x,x \rangle  \chi_{M_{2}}   \parallel = $$ 
	
	$$=\parallel \langle F (x  \chi_{M_{2}}  ),x \chi_{M_{2}}  \rangle \parallel - \parallel \tilde{\alpha} \langle x,x \rangle  \chi_{M_{2}}   \parallel \geq m \parallel \langle x \cdot \chi_{M_{2}}  ,x \cdot \chi_{M_{2}}  \rangle \parallel -$$ 
	$$-(m - \epsilon)\parallel \langle x  ,x \rangle \chi_{M_{2}}   \parallel = \epsilon \parallel \langle x  ,x \rangle \chi_{M_{2}}   \parallel.  $$
	Hence $\parallel \langle (F - \alpha I)x,x \rangle  \parallel \geq $
	$\epsilon \max  \lbrace \parallel \langle x  ,x \rangle \chi_{M_{2}}   \parallel ,  \parallel \langle x  ,x \rangle M_{2}   \rangle \parallel  = \epsilon  \parallel \langle x  ,x \rangle    \rangle \parallel  .$ 
	
	Thus $\parallel (F - \alpha I)x\parallel \parallel x \parallel  \geq
	\parallel \langle (F - \alpha I)x,x \rangle \parallel \geq \epsilon  \parallel x \parallel^{2} $ for a $x \in H_{\mathcal{A}} .$ It follows that $F - \alpha I $ is bounded below, hence from the previous result we deduce that $F - \alpha I $ is invertible in $B^{a}(H_{\mathcal{A}}) .$ 
	
	The proof in the case when $\mathcal{A}=C([0,1]) $ is similar, but more simple, because if $\alpha \in \mathcal{A} $ and $\vert \alpha \vert (0,1) \cap [m,M]= \varnothing ,$ then by continuity of $\vert \alpha \vert $ we must either have that $\vert \alpha \vert < m $ or $\vert \alpha \vert > M $ that  on the whole interval $[0,1] .$ Moreover, there exist then $\epsilon >0 $ such that $\vert \alpha \vert m-\epsilon $ or $\vert \alpha \vert \geq M+ \epsilon $ on the whole $[0,1] $ (since $\vert \alpha \vert $ reaches its maximum and minimun on $ [0,1] ).$ Then we may proceed in the same way as in the proof above.
\end{proof}
\begin{lemma} \label{09l 03} %\underline{{09l 03}}
	Let $\mathcal{A}$ be a commutative unital $C^{*}$-algebra and $F$ be a normal operator on $H_{\mathcal{A}} ,$ that is $FF^{*}= F^{*}F.$ If $\alpha_{1}, \alpha_{2} \in \sigma_{p}^{\mathcal{A}}(F) $ and $\alpha_{1} - \alpha_{2} $ is invertible in $\mathcal{A},$ then $\ker (F-\alpha_{1} I) \perp  \ker (F-\alpha_{2} I).$ 
\end{lemma}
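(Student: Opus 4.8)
The plan is to transcribe the classical proof that eigenvectors of a normal operator belonging to distinct eigenvalues are orthogonal, while keeping track of the fact that in the module setting one extracts an \emph{equation in $\mathcal{A}$} rather than a scalar identity, so that invertibility of $\alpha_1-\alpha_2$ (not merely $\alpha_1\neq\alpha_2$) is exactly what is needed at the end.

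First I would record two preliminary observations. (i) For each $j$ the operator $F-\alpha_j I$ belongs to $B^a(H_{\mathcal{A}})$ with adjoint $F^*-\alpha_j^* I$, and it is again normal: since $F$ is $\mathcal{A}$-linear it commutes with every $\beta I$, and since $\mathcal{A}$ is commutative $\alpha_j\alpha_j^*=\alpha_j^*\alpha_j$; expanding $(F-\alpha_j I)(F^*-\alpha_j^* I)$ and $(F^*-\alpha_j^* I)(F-\alpha_j I)$ and using in addition $FF^*=F^*F$, the two products agree. (ii) For any normal $N\in B^a(H_{\mathcal{A}})$ one has $\ker N=\ker N^*$: indeed $\langle Nx,Nx\rangle=\langle x,N^*Nx\rangle=\langle x,NN^*x\rangle=\langle N^*x,N^*x\rangle$ as elements of $\mathcal{A}$, and since $\langle z,z\rangle=0$ forces $z=0$ in the Hilbert $C^*$-module $H_{\mathcal{A}}$, we get $Nx=0\iff N^*x=0$. (This is the analogue, for normal operators, of the commutation argument used in Lemma \ref{09l 01}.) Applying (ii) with $N=F-\alpha_j I$ and invoking (i) gives $\ker(F-\alpha_j I)=\ker(F^*-\alpha_j^* I)$ for $j=1,2$.

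Now take $x\in\ker(F-\alpha_1 I)$ and $y\in\ker(F-\alpha_2 I)$, so that $Fx=\alpha_1 x$ and, by the previous paragraph, $F^*y=\alpha_2^* y$. I would compute $\langle Fx,y\rangle$ in two ways. On one hand $\langle Fx,y\rangle=\langle(\alpha_1 I)x,y\rangle=\alpha_1^*\langle x,y\rangle$; on the other hand, by adjointability, $\langle Fx,y\rangle=\langle x,F^*y\rangle=\langle x,(\alpha_2^* I)y\rangle=\langle x,y\rangle\alpha_2^*=\alpha_2^*\langle x,y\rangle$, the last step using commutativity of $\mathcal{A}$. Subtracting yields $(\alpha_1^*-\alpha_2^*)\langle x,y\rangle=0$. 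Since $\alpha_1-\alpha_2\in G(\mathcal{A})$, also $(\alpha_1-\alpha_2)^*=\alpha_1^*-\alpha_2^*\in G(\mathcal{A})$, and hence $\langle x,y\rangle=0$. As $x$ and $y$ were arbitrary, $\ker(F-\alpha_1 I)\perp\ker(F-\alpha_2 I)$.

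The routine parts are the two expansions in (i) and the bookkeeping of which side the algebra elements come out of the inner product (harmless here because $\mathcal{A}$ is commutative). The one step I would be careful to spell out is (ii), $\ker N=\ker N^*$ for normal $N$, since it is what replaces the classical statement ``$F$ normal $\Rightarrow F^*$ acts as $\overline{\lambda}$ on $\lambda$-eigenvectors''; it is short but is the crux. The genuinely conceptual point --- and the reason the hypothesis is stated with invertibility --- is that the classical cancellation ``$\lambda\neq\mu\Rightarrow\langle x,y\rangle=0$'' becomes, in the module, the cancellation of the factor $(\alpha_1-\alpha_2)^*$ from $(\alpha_1^*-\alpha_2^*)\langle x,y\rangle=0$, which requires $\alpha_1-\alpha_2$ to be invertible rather than merely nonzero.
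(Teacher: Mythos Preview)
Your proof is correct and follows essentially the same route as the paper: establish $\ker(F-\alpha_j I)=\ker(F^*-\alpha_j^* I)$ from normality of $F-\alpha_j I$, then compute $\langle Fx,y\rangle$ in two ways to obtain an equation of the form $(\alpha_1-\alpha_2)\,$-times-$\langle x,y\rangle=0$ (the paper lands on $\langle x_2,x_1\rangle(\alpha_1-\alpha_2)=0$, you on $(\alpha_1^*-\alpha_2^*)\langle x,y\rangle=0$, which are equivalent by commutativity and taking adjoints), and cancel using invertibility. Your write-up is in fact more careful than the paper's in spelling out why $\ker N=\ker N^*$; the only minor imprecision is that the commutation of $F$ with $\beta I$ uses not just $\mathcal{A}$-linearity but also commutativity of $\mathcal{A}$ (since $\beta I$ is left multiplication while $\mathcal{A}$-linearity is for the right action), though this is harmless here.
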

\begin{proof}
	Since $F$ commutes with $F^{*}$ and $\mathcal{A}$ is a commutative unital $C^{*}$-algebra, then $F-\alpha_{2}I $ and $F^{*}-\alpha_{2}^{*}I $ commute. Hence $\ker (F-\alpha_{2}I)=\ker (F^{*}-\alpha_{2}^{*}I) .$ For $x_{1} \in \ker (F-\alpha_{2}I) $ and $x_{2} \in \ker (F-\alpha_{2} I)=\ker (F^{*}-\alpha_{2}^{*}I)$ we get $ \langle x_{2} - x_{1} \rangle (\alpha_{1} - \alpha_{2})=\langle x_{2} - x_{1} \rangle \alpha_{1} - \alpha_{2} \langle x_{2} - x_{1} \rangle = \langle x_{2},Fx_{1} \rangle - \langle F^{*}x_{2}, x_{1} \rangle = 0  .$ Since $( \alpha_{1} - \alpha_{2} ) $ is invertible by assumption, it follows that $\langle x_{2},x_{1} \rangle =0 .$
\end{proof}
\begin{example} \label{09e 11} %\underline{{09e 11}}
	Let  $\mathcal{A}=C([0,1]) $ or $\mathcal{A}=L^{\infty}((0,1)),$ consider the self-adjoint operator $G$ from the previous example. For any function $f$ in $\mathcal{A}$ with the support contained in $J_{2} ,$ we have $(f,0,0, \cdots) \in \ker G \cap \ker (\tilde{g}I-G).$ However $\tilde{g}=ig_{1}\neq 0 $ and $f  \neq 0,$ but $\tilde{g}  $ is \underline{not} invertible in $\mathcal{A}.$
\end{example}
\begin{example}
	Let $\mathcal{A}=B(H) $ and $T \in \mathcal{A} $ be a normal and invertible operator. If $H_{1} $ and $H_{2} $ are two closed subspaces of $H$ such that $H=H_{1} \tilde{ \oplus} H_{2} $ and $ H_{1} \neq H_{2}^{\perp} $ (that is $H_{1} $ and $H_{2} $ are not mutually orthogonal), then $T \Pi  $ and $T(1-\Pi) $ are elements of $\sigma_{p}^{\mathcal{A}}(T \cdot I) ,$ where $\Pi $ stands for the skew projection onto $H_{1} $ along $ H_{2}.$ Moreover, the operator $T \cdot I $ is normal operator on $H_{\mathcal{A}} $ and $ T \Pi  - T(1- \Pi )$ is invertible in $\mathcal{A} .$ However, if $ P_{1}$ and $P_{2} $ denote the orthogonal projections onto $H_{1} $ and $H_{2} ,$ respectively, then $e_{j}\cdot P_{1} \in \ker (T\Pi \cdot I- T\cdot I), e_{j} \cdot P_{2} \in \ker (T(I - \Pi )\cdot I - T \cdot I ) $ for all $j $ and $P_{1}P_{2} \neq 0 .$ So the assumption that  $\mathcal{A}$ is commutative is indeed necessary in Lemma  \ref{09l 03}.
\end{example} \label{09e 13} %\underline{{09e 13}}
\begin{lemma} \label{09l 04} %\underline{{09l 04}}
	Let $\mathcal{A}$ be a commutative $C^{*}$-algebra and $F$ be a normal operator on $H_{\mathcal{A}}.$ Then $ \sigma_{rl}^{\mathcal{A}}(F)=\varnothing ,$ hence $\sigma^{\mathcal{A}}(F)=\sigma_{p}^{\mathcal{A}}(F) \cup \sigma_{cl}^{\mathcal{A}}(F) .$ 
\end{lemma}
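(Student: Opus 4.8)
The plan is to mimic, for normal operators, the argument behind Lemma \ref{09l 02}: I will show that when $\mathcal{A}$ is commutative and $F$ is normal, boundedness below of $F-\alpha I$ already forces invertibility, so there is nothing left for $\sigma_{rl}^{\mathcal{A}}$ to contain.

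First I would observe that, because $\mathcal{A}$ is commutative, $\alpha I$ is central in $B^{a}(H_{\mathcal{A}})$: for any right $\mathcal{A}$-linear operator $G$ and any $x\in H_{\mathcal{A}}$ one has $\alpha I x = x\cdot\alpha$ (coordinatewise, using $ax_{i}=x_{i}a$), hence $G(\alpha I x)=G(x\cdot\alpha)=G(x)\cdot\alpha=\alpha I(Gx)$. In particular $\alpha I$ commutes with both $F$ and $F^{*}$, so, together with $FF^{*}=F^{*}F$, all four operators $F,F^{*},\alpha I,\alpha^{*}I$ pairwise commute. Expanding $(F-\alpha I)(F-\alpha I)^{*}=(F-\alpha I)(F^{*}-\alpha^{*}I)=(F^{*}-\alpha^{*}I)(F-\alpha I)=(F-\alpha I)^{*}(F-\alpha I)$, one sees that $T:=F-\alpha I$ is normal for every $\alpha\in\mathcal{A}$.

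Next I would record the standard consequence of normality on a Hilbert $C^{*}$-module: for normal $T$, $\|Tx\|^{2}=\|\langle T^{*}Tx,x\rangle\|=\|\langle TT^{*}x,x\rangle\|=\|T^{*}x\|^{2}$, using the $C^{*}$-identity $\|z\|^{2}=\|\langle z,z\rangle\|$. Hence $\ker T=\ker T^{*}$, and $T$ is bounded below if and only if $T^{*}$ is, with the same constant. Now suppose $\alpha\in\mathcal{A}$ is such that $T=F-\alpha I$ is bounded below. Then $T$ is injective and has closed range, so by \cite[Theorem 2.3.3]{MT} (exactly as used in Lemma \ref{09l 02}) $\mathrm{Im}(T)$ is orthogonally complementable and $H_{\mathcal{A}}=\mathrm{Im}(T)\oplus\ker(T^{*})$. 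But $\ker(T^{*})=\ker(T)=\{0\}$ since $T$ is bounded below, so $\mathrm{Im}(T)=H_{\mathcal{A}}$; thus $T$ is a bounded-below bijection, hence invertible in $B^{a}(H_{\mathcal{A}})$, i.e. $\alpha\notin\sigma^{\mathcal{A}}(F)$. Consequently no $\alpha$ can satisfy ``bounded below but not surjective'', which is precisely $\sigma_{rl}^{\mathcal{A}}(F)=\varnothing$. Combining this with the identity $\sigma^{\mathcal{A}}(F)=\sigma_{p}^{\mathcal{A}}(F)\cup\sigma_{rl}^{\mathcal{A}}(F)\cup\sigma_{cl}^{\mathcal{A}}(F)$ noted in the introduction yields $\sigma^{\mathcal{A}}(F)=\sigma_{p}^{\mathcal{A}}(F)\cup\sigma_{cl}^{\mathcal{A}}(F)$.

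The only delicate point is the first step — verifying that $\alpha I$ commutes with $F$ and $F^{*}$ — since this is the sole place where commutativity of $\mathcal{A}$ is used, and it must be checked carefully against the (right-)module conventions on $H_{\mathcal{A}}$; once normality of $F-\alpha I$ is in hand, the remainder is the same orthogonal-complementation argument as in Lemma \ref{09l 02}.
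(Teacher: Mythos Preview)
Your proof is correct and follows essentially the same route as the paper: both arguments use commutativity of $\mathcal{A}$ to see that $F-\alpha I$ is normal (equivalently, that $F-\alpha I$ and $F^{*}-\alpha^{*}I$ commute), deduce $\ker(F-\alpha I)=\ker(F^{*}-\alpha^{*}I)$, and then invoke \cite[Theorem 2.3.3]{MT} to conclude surjectivity from boundedness below. Your write-up is in fact a bit more careful than the paper's in justifying why $\alpha I$ commutes with $F$ and $F^{*}$ (not just with $\alpha^{*}I$), which is indeed the key place commutativity enters.
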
 
\begin{proof}
	Suppose that $\alpha \in \sigma_{rl}^{\mathcal{A}}(F) .$ Then $ F-\alpha I $ is bounded below. Again, since $ F-\alpha I $ and $ F^{*}-\alpha^{*} I $ commute (because $\mathcal{A}$ is commutative, so $\alpha I$ commutes with $\alpha^{*}I ,$) we get that $\ker (F-\alpha I) =\ker (F^{*}-\alpha^{*} I)= \lbrace 0 \rbrace   .$ Next, since $Im (F-\alpha I) $ is closed, by \cite[Theorem 2.3.3]{MT},  we have $H_{\mathcal{A}}=\ker (F^{*}-\alpha^{*}I) \oplus Im(F-\alpha I)= Im (F-\alpha I) .$ So $F-\alpha I $ is surjective, thus invertible, which gives that $ \sigma_{rl}^{\mathcal{A}}(F) = \varnothing .$
\end{proof}
\begin{example}
	Let $\mathcal{A}=B(H) $ and $S, P$ be as in Example \ref{09e 10}. Then $P \cdot I $ is a normal operator on $H_{\mathcal{A}} $ being self-adjoint and $(S-P) \cdot I$ is bounded below on $ H_{\mathcal{A}}.$ Indeed, $\parallel (S-P)x \parallel \geq \parallel x \parallel  $ for all $x \in H ,$ hence $m(S-P) \geq 1 $ so $T^{*}(S-P)^{*}(S-P)T \geq m(S-P)^{2} T^{*}T $ for all $T \in B(H) $ which gives that $(S-P)\cdot I$ is bounded below on $ H_{\mathcal{A}} .$ However, $Im ((S-P)\cdot I)^{\perp}= \ker ((S^{*}-P)\cdot I) .$ and $\ker ((S^{*}-P) \cdot I) \neq \lbrace 0 \rbrace  $ as we have seen in Example \ref{09e 10}. Hence $ P \cdot I$ is a normal operator on $H_{\mathcal{A}} $ and $S \in \sigma_{rl}^{\mathcal{A}}(P \cdot I) $ which shows that the assumption that $\mathcal{A} $ is commutative is indeed necessary in Lemma \ref{09l 04}.
\end{example}
Next, for $F \in B^{a}(H_{\mathcal{A}}) ,$ set $\sigma_{a}^{\mathcal{A}}(F)=\lbrace \alpha \in \mathcal{A} \mid F-\alpha I $ is not bounded below $\rbrace.$
The next two propositions can be proved in exactly the same way same way as for operators on Hilbert spaces.
\begin{proposition} \label{09p 05} %\underline{{09p 05}}
	For $F \in B^{a}(H_{\mathcal{A}}) ,$ we have that $\sigma_{a}^{\mathcal{A}}(F) $ is a closed subset of $\mathcal{A}$ in the norm topology and $\sigma^{\mathcal{A}}(F)=\sigma_{a}^{\mathcal{A}}(F) \cup \sigma_{rl}^{\mathcal{A}}(F) .$ 
\end{proposition}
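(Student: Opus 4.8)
The plan is to follow the classical Hilbert-space arguments, checking only the two places where the module setting could in principle make a difference: the passage from ``bounded below'' to ``closed range'' and the adjointability of the inverse.

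First I would establish the identity $\sigma^{\mathcal{A}}(F)=\sigma_{a}^{\mathcal{A}}(F) \cup \sigma_{rl}^{\mathcal{A}}(F)$. The inclusion $\supseteq$ is immediate: an operator that is not bounded below cannot be invertible, and by definition an element of $\sigma_{rl}^{\mathcal{A}}(F)$ corresponds to an $F-\alpha I$ that is bounded below but not surjective, hence not invertible. For $\subseteq$, take $\alpha \notin \sigma_{a}^{\mathcal{A}}(F) \cup \sigma_{rl}^{\mathcal{A}}(F)$. Then $F-\alpha I$ is bounded below, so it is injective and, because a Cauchy sequence in $Im(F-\alpha I)$ pulls back along the lower bound to a Cauchy sequence in $H_{\mathcal{A}}$, it has closed range; and since $\alpha \notin \sigma_{rl}^{\mathcal{A}}(F)$, it must also be surjective. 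Thus $F-\alpha I$ is a bounded bijective $\mathcal{A}$-linear map whose set-theoretic inverse is bounded (by the lower bound). To see that this inverse lies in $B^{a}(H_{\mathcal{A}})$, note that from $(F-\alpha I)^{-1}(F-\alpha I)=I=(F-\alpha I)(F-\alpha I)^{-1}$ one obtains, upon taking adjoints, that $\bigl((F-\alpha I)^{-1}\bigr)^{*}$ is a two-sided inverse of $F^{*}-\alpha^{*}I=(F-\alpha I)^{*}$; hence $(F-\alpha I)^{-1}$ is adjointable and $\alpha \notin \sigma^{\mathcal{A}}(F)$.

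For the closedness of $\sigma_{a}^{\mathcal{A}}(F)$ I would show that its complement, the set of $\alpha$ for which $F-\alpha I$ is bounded below, is open. If $\parallel(F-\alpha_{0}I)x\parallel \geq c\parallel x\parallel$ for all $x\in H_{\mathcal{A}}$ and some $c>0$, then for every $\alpha$ with $\parallel\alpha-\alpha_{0}\parallel<c$ one has, using $\parallel(\alpha-\alpha_{0})I\parallel=\parallel\alpha-\alpha_{0}\parallel$,
$$\parallel(F-\alpha I)x\parallel \geq \parallel(F-\alpha_{0}I)x\parallel-\parallel(\alpha_{0}-\alpha)Ix\parallel \geq \bigl(c-\parallel\alpha_{0}-\alpha\parallel\bigr)\parallel x\parallel,$$
so $F-\alpha I$ is again bounded below on that ball. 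Hence the complement of $\sigma_{a}^{\mathcal{A}}(F)$ is open and $\sigma_{a}^{\mathcal{A}}(F)$ is norm-closed.

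I do not anticipate a genuine obstacle here; the only subtlety beyond the Hilbert-space case is making sure that ``bounded below and surjective'' yields invertibility inside $B^{a}(H_{\mathcal{A}})$ and not merely in the algebra of bounded $\mathcal{A}$-linear maps, and this is precisely what the adjoint-of-the-inverse computation in the second paragraph takes care of.
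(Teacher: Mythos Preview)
Your argument is exactly the classical one the paper intends; the paper itself gives no proof and simply says the proposition ``can be proved in exactly the same way as for operators on Hilbert spaces.''

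One small correction to your adjointability step: as written it is circular, since taking adjoints of $(F-\alpha I)^{-1}(F-\alpha I)=I$ already presupposes that $(F-\alpha I)^{-1}$ is adjointable. The standard repair is to first observe (e.g.\ via \cite[Theorem~2.3.3]{MT}, which the paper uses elsewhere) that bijectivity of the adjointable operator $F-\alpha I$ forces $(F-\alpha I)^{*}$ to be bijective as well, and then verify directly that $\bigl((F-\alpha I)^{*}\bigr)^{-1}$ satisfies the adjoint identity for $(F-\alpha I)^{-1}$; alternatively one may simply quote the well-known fact that a bijective adjointable operator on a Hilbert $C^{*}$-module has adjointable inverse. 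With that adjustment your proof is complete and more detailed than what the paper provides.
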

\begin{proposition} \label{09p 07} %\underline{{09p 07}}
	If $F \in B^{a}(H_{\mathcal{A}}) ,$ then $\delta \sigma^{\mathcal{A}}(F) \subseteq \sigma_{a}^{\mathcal{A}}(F).$ Moreover, if $M$ is a closed submodule of $H_{\mathcal{A}} $ and invariant with respect to $F,$ and $F_{0}=F_{\mid_{M}},$ then we have $\partial  \sigma^{\mathcal{A}}(F_{0}) \subseteq \sigma_{a}^{\mathcal{A}}(F) ,$ $\sigma^{\mathcal{A}}(F_{0}) \cap \sigma^{\mathcal{A}}(F) = \sigma_{rl}^{\mathcal{A}}(F_{0}).$
\end{proposition}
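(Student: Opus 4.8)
The plan is to reproduce, in the Hilbert $C^{*}$-module setting, the classical proof that the topological boundary of the spectrum lies in the approximate point spectrum. For the first assertion, let $\alpha\in\partial\sigma^{\mathcal{A}}(F)$ (written $\delta\sigma^{\mathcal{A}}(F)$ in the statement). Since $\sigma^{\mathcal{A}}(F)$ is closed in the norm topology, $\alpha\in\sigma^{\mathcal{A}}(F)$, and since $\alpha$ is a boundary point there is a sequence $\alpha_{n}\to\alpha$ in $\mathcal{A}$ with $F-\alpha_{n}I$ invertible in $B^{a}(H_{\mathcal{A}})$. Arguing by contradiction, assume $F-\alpha I$ is bounded below, say $\|(F-\alpha I)x\|\ge c\|x\|$ for all $x$. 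First I would note that for large $n$ one has $\|(F-\alpha_{n}I)x\|\ge(c-\|\alpha-\alpha_{n}\|)\|x\|\ge\tfrac{c}{2}\|x\|$, so that, $F-\alpha_{n}I$ being invertible and bounded below by $c/2$, $\|(F-\alpha_{n}I)^{-1}\|\le 2/c$ for all large $n$.

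Next I would invoke the resolvent identity $(F-\alpha_{n}I)^{-1}-(F-\alpha_{m}I)^{-1}=(F-\alpha_{n}I)^{-1}(\alpha_{n}-\alpha_{m})I(F-\alpha_{m}I)^{-1}$ together with the uniform bound to get $\|(F-\alpha_{n}I)^{-1}-(F-\alpha_{m}I)^{-1}\|\le(4/c^{2})\|\alpha_{n}-\alpha_{m}\|$; hence the sequence $\{(F-\alpha_{n}I)^{-1}\}$ is Cauchy and, $B^{a}(H_{\mathcal{A}})$ being complete, converges to some $R\in B^{a}(H_{\mathcal{A}})$. Passing to the limit in $(F-\alpha I)(F-\alpha_{n}I)^{-1}=I+(\alpha_{n}-\alpha)I(F-\alpha_{n}I)^{-1}$ and in the symmetric identity shows $(F-\alpha I)R=R(F-\alpha I)=I$, so $F-\alpha I$ is invertible, contradicting $\alpha\in\sigma^{\mathcal{A}}(F)$. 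Thus $F-\alpha I$ is not bounded below, i.e.\ $\alpha\in\sigma_{a}^{\mathcal{A}}(F)$.

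For the ``moreover'' part, $M$ is itself a Hilbert $C^{*}$-module and the bounded $\mathcal{A}$-linear operators on $M$ form a Banach algebra, so the argument above applies verbatim to $F_{0}=F|_{M}$ and yields $\partial\sigma^{\mathcal{A}}(F_{0})\subseteq\sigma_{a}^{\mathcal{A}}(F_{0})$. If $\alpha\in\sigma_{a}^{\mathcal{A}}(F_{0})$, pick unit vectors $x_{n}\in M$ with $(F_{0}-\alpha I)x_{n}=(F-\alpha I)x_{n}\to 0$; since the $x_{n}$ are unit vectors of $H_{\mathcal{A}}$, $F-\alpha I$ is not bounded below on $H_{\mathcal{A}}$, whence $\alpha\in\sigma_{a}^{\mathcal{A}}(F)$ and $\partial\sigma^{\mathcal{A}}(F_{0})\subseteq\sigma_{a}^{\mathcal{A}}(F)$. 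For the last relation, take $\alpha\notin\sigma^{\mathcal{A}}(F)$: then $F-\alpha I$ is invertible, hence bounded below on $H_{\mathcal{A}}$, so the restriction $F_{0}-\alpha I$ is injective and bounded below on $M$ with the same constant. Consequently, if such an $\alpha$ also lies in $\sigma^{\mathcal{A}}(F_{0})$, then $F_{0}-\alpha I$ is bounded below but not surjective onto $M$, i.e.\ $\alpha\in\sigma_{rl}^{\mathcal{A}}(F_{0})$; since trivially $\sigma_{rl}^{\mathcal{A}}(F_{0})\subseteq\sigma^{\mathcal{A}}(F_{0})$, this gives the asserted coincidence of $\sigma^{\mathcal{A}}(F_{0})$ with $\sigma_{rl}^{\mathcal{A}}(F_{0})$ off $\sigma^{\mathcal{A}}(F)$ (equivalently, $\sigma^{\mathcal{A}}(F_{0})\setminus\sigma^{\mathcal{A}}(F)\subseteq\sigma_{rl}^{\mathcal{A}}(F_{0})$).

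The only places where the module setting has to be checked — and the main, rather mild, obstacle — are: (i) that ``bounded below'' interacts with invertibility exactly as on Hilbert spaces, so a lower bound $c/2$ forces $\|(F-\alpha_{n}I)^{-1}\|\le 2/c$; and (ii) that the norm-limit $R$ of the inverses, which exists by completeness of the $C^{*}$-algebra $B^{a}(H_{\mathcal{A}})$ (respectively of the bounded operators on $M$), is a genuine two-sided inverse of $F-\alpha I$. After these observations the argument is word-for-word the classical one with the scalar $\lambda$ replaced by $\alpha\in\mathcal{A}$ and $|\lambda-\mu|$ by $\|\alpha-\beta\|$; the closedness of $\sigma^{\mathcal{A}}(F)$, used to pass from $\partial\sigma^{\mathcal{A}}(F)$ to $\sigma^{\mathcal{A}}(F)$, is the only external fact needed beyond the definitions.
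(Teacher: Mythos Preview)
The paper does not actually prove this proposition: it merely remarks that ``the next two propositions can be proved in exactly the same way as for operators on Hilbert spaces,'' referring to \cite{KU}. Your proposal carries out precisely that classical argument --- boundary point, approximating invertible sequence, uniform bound on the inverses via the lower bound, resolvent identity to get a Cauchy sequence of inverses, and passage to the limit --- with the scalar $\lambda$ replaced by $\alpha\in\mathcal{A}$; this is exactly what the paper intends, so your approach coincides with the paper's (unwritten) proof. Your observation that the printed relation $\sigma^{\mathcal{A}}(F_{0})\cap\sigma^{\mathcal{A}}(F)=\sigma_{rl}^{\mathcal{A}}(F_{0})$ should be read as $\sigma^{\mathcal{A}}(F_{0})\setminus\sigma^{\mathcal{A}}(F)\subseteq\sigma_{rl}^{\mathcal{A}}(F_{0})$ is well taken, since that is the classical statement and the only inclusion your argument (or the classical one) establishes.
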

\begin{example} \label{09e 12} %\underline{{09e 12}}
	We may also consider the operators on $H_{\mathcal{A}} $ when $\mathcal{A} $ is a unital $ C^{*}$-algebra defined as $W^{\prime}(e_{k}) =e_{2k}$ and $W^{\prime \prime}(e_{k}) =e_{2k-1}$ $ \forall k \in \mathbb{N} .$ Also for these operators we have $\sigma^{\mathcal{A}}(W^{\prime})= \sigma^{\mathcal{A}}(W^{\prime \prime})=\lbrace \alpha \in \mathcal{A} \mid \inf \vert \alpha \vert \leq 1 \rbrace $ in the case when $\mathcal{A} = C([0,1]) $ or when $\mathcal{A}=L^{\infty}((0,1)) .$ Suppose now that $\mathcal{A}=L^{\infty}((0,1)) $ and consider the operator $F$ on $H_{\mathcal{A}} $ given by $F(f_{1},f_{2},f_{3},\cdots) = ( \chi_{(0,\frac{1}{2})})f_{1} , \chi_{(\frac{1}{2},1)}f_{1},\chi_{(0,\frac{1}{2})})f_{2}, \chi_{(\frac{1}{2},1)}f_{2}, \cdots  ).$ It follows that $F$ has the matrix
	$\left\lbrack
	\begin{array}{ll}
	W^{\prime \prime} & 0 \\
	0& W^{\prime} \\
	\end{array}
	\right \rbrack
	$ 
	w.r.t the decomposition
	$(H_{\mathcal{A}} \cdot  \chi_{(0,\frac{1}{2})})  \oplus       (H_{\mathcal{A}} \cdot  \chi_{(\frac{1}{2},1)}) .$ Therefore $\sigma^{\mathcal{A}}(F)=\lbrace \alpha \in \mathcal{A} \mid \inf \vert \alpha \vert \leq 1 \rbrace .$ Next $\sigma_{p}^{\mathcal{A}}(W^{\prime})=\varnothing,$
	$ \sigma_{p}^{\mathcal{A}}(W^{\prime \prime})=\lbrace \alpha \in \mathcal{A} \mid \alpha=1 \text{ on some closed subinterval } J \subseteq [0,1]\rbrace $ in the case when $\mathcal{A} = C([0,1]) $ and $ \sigma_{p}^{\mathcal{A}}(W^{\prime \prime})=\lbrace \alpha \in \mathcal{A} \mid \mu (\lbrace t \in (0,1) \mid \alpha(t)=1 \rbrace ) >0 \rbrace $ in the case $\mathcal{A}=L^{\infty}((0,1)).$ Hence $\sigma_{p}^{\mathcal{A}}(F)=\lbrace \alpha \in \mathcal{A} \mid \mu (\lbrace t \in (0,\dfrac{1}{2}) \mid \alpha(t)=1 \rbrace ) >0 \rbrace .$

\end{example}
Consider now the operators\\
 $Z(e_{j})=\left\{\begin{matrix}
 e_{k}&  \text{ when }  j=2k  \\ 
 0&  \text{ else } 
 \end{matrix}\right. $ 
 , $k \in \mathbb{N} $\\
 $Z^{\prime}(e_{j})=\left\{\begin{matrix}
e_{k}&  \text{ when }  j=2k-1  \\ 
0& \text{ else } 
\end{matrix}\right. $ 
 , $k \in \mathbb{N} $  \\
Then $\sigma^{\mathcal{A}}(Z)=\sigma^{\mathcal{A}}(Z^{\prime})=\lbrace \alpha \in \mathcal{A} \mid \inf \vert \alpha \vert \leq 1 \rbrace . $  
This follows since $Z={W^{\prime}}^{*} and $   $Z^{\prime}={W^{\prime \prime }}^{*} .$ Moreover, we have $\sigma_{p}^{\mathcal{A}}(Z)= \lbrace \alpha \in \mathcal{A} \mid \inf \vert \alpha \vert < 1 \rbrace $ in both cases.  In the case $\mathcal{A} =L^{\infty}((0,1)) $ we have $\sigma_{p}^{\mathcal{A}}(Z^{\prime})= \lbrace \alpha \in \mathcal{A} \mid \inf \vert \alpha \vert < 1 \text{ or } \mu ( \lbrace t \in (0,1) \mid \alpha (t)=1) \rbrace>0 \rbrace  $ and in the case when $\mathcal{A} = C([0,1])$ we have $ \sigma_{p}^{\mathcal{A}}(Z^{\prime}) = \lbrace \alpha \in \mathcal{A} \mid \inf \vert \alpha \vert < 1 \text{ or } \alpha=1 \text{ on some closed subinterval } J \subseteq [0,1] \rbrace .$ Let operator $D$ on $H_{\mathcal{A}} $ be given by  $D(g_{1},g_{2},g_{3},\cdots) = (g_{1} \chi_{(0,\frac{1}{2})}+ g_{2}\chi_{(\frac{1}{2},1)},g_{3}\chi_{(0,\frac{1}{2})})+ g_{4}\chi_{(\frac{1}{2},1)}, \cdots  )$ when $\mathcal{A}=L^{\infty}((0,1))   .$ Then $D= F^{*}$ and $D$ has the matrix 
$\left\lbrack
\begin{array}{ll}
Z^{\prime } & 0 \\
0& Z \\
\end{array}
\right \rbrack
$
 w.r.t. the decomposition $H_{\mathcal{A}} \chi_{(0,\frac{1}{2})} \oplus H_{\mathcal{A}} \chi_{(\frac{1}{2},1)}$ It follows that $\sigma^{\mathcal{A}}(D)= \lbrace \alpha \in \mathcal{A} \mid \inf \vert \alpha \vert \leq 1 \rbrace $ and 
 $\sigma_{p}^{\mathcal{A}}(D)= \lbrace \alpha \in \mathcal{A} \mid \inf \vert \alpha \vert < 1 \text{ or } \mu ( \lbrace t \in (0,\frac{1}{2}) \mid \alpha (t)=1) \rbrace >0 \rbrace    . $\\
 
Now we are going to prove the statements in this example. Again, we consider two cases.\\
	Case 1: $ \mathcal{A} =L^{\infty}(0,1).$ If $\inf \vert \alpha \vert <1 $ for $\alpha \in \mathcal{A} ,$ then there exist an $\epsilon >0 $ such that $\mu (\vert \alpha \vert^{-1} ((0, 1-\epsilon))) >0 .$ Set $M_{\epsilon}=\vert \alpha \vert^{-1} ((0, 1-\epsilon)) $ and put $x_{\alpha}=\sum_{K=0}^{\infty} e_{2^{k}} \cdot \chi_{M_{\epsilon}} \overline{ \alpha }^k .$ Then it is easily seen that $x_{\alpha} $ indeed is an element of $H_{\mathcal{A}} $ and moreover $\langle (\alpha I - W^{\prime})e_{k}, x_{\alpha I} \rangle =0 $ for all $k, $ so $x_{\alpha} \in Im(\alpha I - W^{\prime})^{\perp}  .$ Therefore, $ \alpha \in \sigma^{\mathcal{A}} (W^{\prime}) .$ On the other hand, we have $\parallel W^{\prime} \parallel =1 ,$ hence if $inf \vert \alpha \vert >1,$ it follows by the same arguments as before that $\alpha I - W^{\prime} = \alpha (I-\alpha^{-1}) W^{\prime} $ is  invertible in $B^{a}(H_{\mathcal{A}}) .$ Hence, using that $\sigma^{\mathcal{A}} (W^{\prime}) $ is closed in the norm topology of $\mathcal{A},$ we obtain that $\sigma^{\mathcal{A}} (W^{\prime})= \lbrace \alpha \in \mathcal{A} \mid inf \vert \alpha \vert \leq 1 \rbrace.$ Next, if $(\alpha I - W^{\prime})(x)=0 ,$ then we must have $\alpha x_{2k}-x_{k} =0$ for all $k \in \mathbb{N} $ which gives $x_{k}=0 $ a.e.  on $\lbrace t \in (0,1) \mid \alpha (t)=0 \rbrace $ for all $ k \in \mathbb{N}.$ Moreover, $ \alpha x_{m}=0$ for all odd $m.$ All this together implies that $x_{m} $ for all odd $m.$ Now, if $j=2^{k}m $ where m is odd, we get $x_{m}=\alpha^{k}x_{j} .$ Since $x_{m}=0 ,$ it follows that $x_{j}=0 $ a.e. on $ \lbrace t \in (0,1) \mid \alpha (t) \neq 0 \rbrace .$ But since $x_{n}=0 $ a.e. on $ \lbrace t \in (0,1) \mid \alpha (t) = 0 \rbrace .$ for all $n \in \mathbb{N} ,$ we must have $x_{j}=0 .$ In particular $ x_{2}=0 .$ Then, from the arguments above it also follows that $x_{2^{k}}=x_{2^{k-1} \cdot 2}=0 $ for all $k.$ Thus $x=0 ,$ so $\sigma^{\mathcal{A}}(W^{\prime}) =  \varnothing .$ The proof for the case when $\mathcal{A}=C([0,1]) $ is similar.

	Next, we consider the operator $W^{\prime \prime}$. If inf $\vert \alpha \vert <1  $ for some $\alpha \in \mathcal{A}= L^{\infty}((0,1))  ,$ we let $ M_{\epsilon} \subseteq (0,1) $ be the same set as before. Put $y_{\alpha}=\sum e_{r_{k}} \chi_{M_{\epsilon}} \overline{ \alpha }^{(k-1)} ,$ where $\lbrace r_{k} \rbrace $ is the strictly increasing sequence of positive integers recursively defined by $r_{k+1}=2r_{k}-1 $ for $k \in \mathbb{N} $ and $r_{1}=2 .$ Then we have $\langle  (\alpha I -W^{\prime \prime})e_{k},y_{\alpha} \rangle $ for all $k \in \mathbb{N} ,$ so $y_{\alpha} \in Im (\alpha I - W^{\prime \prime })^{\perp}  .$ By the same arguments as before using that $\parallel W^{\prime \prime } \parallel=1 ,$  we obtain that $\sigma^{\mathcal{A}}(W^{\prime \prime })= \lbrace \alpha \in \mathcal{A} \mid \inf \vert \alpha \vert \leq 1 \rbrace .$ 
	Again, the proof for the case when $\mathcal{A}=C([0,1]) $ is similar. Suppose next that $(\alpha I - W^{\prime \prime })x=0 $ for some $x \in H_{\mathcal{A}} .$ Then for all $k \in \mathbb{N} $ we must have $\alpha x_{2k-1} - x_{k} = 0 $ which gives $x_{k}=0 $ a.e. on $ \lbrace t \in (0,1) \mid \alpha (t)=0 \rbrace$ for all $k \in \mathbb{N}.$ Moreover $\alpha x_{2k} = 0 $ for all $k$ and all this together implies that $x_{2k}=0 $ for all $k.$ If $\alpha \neq 1 $ a.e. on $(0,1),$ then the equation $\alpha x_{1} - x_{1}=0 $ gives $x_{1}=0 .$ Assume now that $x_{2j-1} =0$ for $j \in \lbrace 1, \cdots m \rbrace .$ We have $  \alpha x_{2m+1} - x_{m+1}= 0 .$ If $m+1 $ is even, then we already have that $ x_{m+1}=0.$ If $ m+1$ is odd, then by induction hypothesis we must have $x_{m+1}=0 $ since $m+1 \leq 2m-1 $ in this case (we assume that $m \in \mathbb{N},$ so if $ m+1 $ is odd, then $m \geq 2 ).$ Hence $\alpha x_{2m+1} = 0 .$ Combining this together with the fact that $x_{2m+1}=0 $ a.e. on $\lbrace t \in (0,1) \mid \alpha (t)=0 \rbrace ,$ we deduce that $x_{2m+1}=0 .$ By induction we obtain that $x_{2k-1} = 0 $ for all $ k \in \mathbb{N}.$ Hence $x=0 .$ So, if $\alpha \in  \mathcal{A}$ is such that $ \alpha \neq 1 $ a.e. then $\alpha \notin \sigma_{p}^{\mathcal{A}}(W^{\prime \prime}) .$ On the other hand, if $\alpha \in \mathcal{A} $ is such that $\mu( \lbrace t \in (0,1) \mid \alpha (t) = 1 \rbrace ) > 0 ,$ then if we let $M=\alpha^{-1}(\lbrace 1 \rbrace) ,$ we have $\chi_{M} \neq 0 .$ Hence $(\alpha I - W^{\prime \prime}) (\chi_{M},0,0,0,\cdots)=0$ so $\alpha \in \sigma_{p}^{\mathcal{A}}(W^{\prime \prime})  $ in this case. The proof for the case when $\mathcal{A}=C([0,1]) $ is similar. 
	
	Indeed, the equation $\alpha x_{1}-x_{1} $ gives that $x_{1}=0 $ on $\text{supp }(\alpha - 1) .$ If $\text{supp } (\alpha - 1) \neq [0,1] ,$ then there exists an open interval $J_{0} \subseteq \text{supp } (\alpha - 1)^{c}  ,$  since $\text{supp } (\alpha - 1)^{c} $ is open. On $J_{0} $ we have that $\alpha - 1=0 ,$ hence by continuity of $\alpha - 1 ,$ we must have that $\alpha - 1=0 $ on $\overline{J_{0}} ,$ which is a closed interval. Hence, if there is no closed subinterval of $[0,1] $ on which $\alpha =1 ,$ we must have that $\text{supp } (\alpha - 1)=[0,1],$ which gives $x_{1} =0 .$ Then we can proceed in the same way as above. On the other hand, if $\alpha=1 $ on some subinterval $[t_{1},t_{2}] \subseteq [0,1]  ,$ then we may choose an $ x \in C([0,1]) $ such that $\text{supp } x \subseteq [t_{1},t_{2}] .$ Then, $e_{1} \cdot x \in \ker (\alpha I - W^{\prime \prime}) .$
	
	Now we consider the operators $Z$ and $ Z^{\prime}.$ Suppose that $\alpha \in \mathcal{A} = L^{\infty}((0,1)) $ and $\inf \vert \alpha \vert < 1 .$

	Let $\epsilon >0 $ be s.t. $\chi_{M_{\epsilon}} \neq 0 ,$ where $M_{\epsilon} =  \vert \alpha \vert^{-1} ((0,1- \epsilon))   .$ Set $ x_{\alpha}= \sum_{k=1}^{\infty} e_{2^{k}} \cdot \chi_{M_{\epsilon}} \alpha^{k-1},$ then $ x_{\alpha} \in H_{\mathcal{A}} , x_{\alpha} \neq 0$ and $(\alpha I - Z)x_{\alpha}=0 .$ Hence $\alpha \in \sigma_{p}^{\mathcal{A}}(Z) .$ Let now $x \in H_{\mathcal{A}} $ be such that  $\inf \vert \alpha \vert = 1 .$ Then, if $(\alpha I-Z)x=0 $ for some $x \in H_{\mathcal{A}} ,$ we must have $\alpha x_{k} -x_{2k}=0 $ for all $k \in \mathbb{N} .$ If $x_{j} \neq 0 $ for some $j \in \mathbb{N} ,$ then $ x_{2^{k}j}$ for all $k \in \mathbb{N} .$ Hence $\vert x_{2^{k}j} \vert \geq \vert x_{j} \vert  $ for all $k \in \mathbb{N} $ (since inf $\vert \alpha \vert = 1 $). This is improssible since $x_{j} \neq 0 ,$ so we deduce that $ x_{j} = 0 $ for all $ j \in \mathbb{N} ,$ hence $ x=0 .$ Therefore $\sigma_{p}^{\mathcal{A}}(Z) = \lbrace \alpha \in \mathcal{A} \mid \inf \vert \alpha \vert < 1 \rbrace  .$  Next, we consider the operator $Z^{\prime} .$ Suppose that $\alpha \in \mathcal{A} $ and  $ \inf \parallel \alpha \parallel <1   .$ Let $M_{\epsilon} $ and $\chi_{M_{\epsilon}} $ be as before. Set $x_{\alpha}= \sum_{k=1}^{\infty} e_{r_{k}} \cdot \chi_{M_{\epsilon}} \alpha^{k-1}, $ where $\lbrace r_{k} \rbrace $ is the strictly increasing sequence of positive integers recursively defined by $r_{1}=2, r_{k+1}=2r_{k}-1 .$ Then $ (\alpha I - Z^{\prime})x_{\alpha}=0.$ Hence $\alpha \in \sigma_{p}^{\mathcal{A}}(Z^{\prime}) .$ Assume now that $\alpha \in \mathcal{A} $ and $\mu( \lbrace t \in (0,1) \mid \alpha (t) = 1 \rbrace ) > 0 .$ Set $M=\alpha^{-1}(\lbrace 1 \rbrace ) ,$ then $\chi_{M} \neq 0  $ and $(\alpha I - Z^{\prime }) (\chi_{M},0,0,0,\cdots)=0,$ so $\alpha \in \sigma_{p}^{\mathcal{A}}(Z^{\prime })  $ in this case. On the other hand, if $\alpha \in \mathcal{A} $ is such that $\alpha \neq 1 $ a.e on $(0,1)$ then $x_{1}=0 $ for all $x \in H_{\mathcal{A}} $ satisfying $(\alpha I - Z^{\prime })x=0 .$ This is because $\alpha x_{1}-x_{1}=0 $ then. Assume in addition that $\inf \vert \alpha \vert =1 .$ If $(\alpha I - Z^{\prime })x=0  $ for some $x \in H_{\mathcal{A}} $ and $x_{j}\neq 0 $ for some $ j \geq 2 $ then we must have $x_{\tilde{r}_{k}}=\alpha^{k-1}x_{j} $ where $\lbrace \tilde{r}_{k} \rbrace $ is the strictly increasing sequence of positive integers recursively defined by $\tilde{r}_{1}=j, \tilde{r}_{k+1}=2\tilde{r}_{k}-1 .$ This gives $\vert x\tilde{r}_{k} \vert \geq x_{j}$ for all $k,$ which is improssible since $x \in H_{\mathcal{A}} .$ Hence $x=0 ,$ so we deduce that $\alpha \notin \sigma_{p}^{\mathcal{A}}(Z^{\prime}) $ if $\inf \vert \alpha \vert  \geq 1 $ and $\alpha \neq 1 $ a.e. on $(0,1).$ The proof for the case when $\mathcal{A} =C([0,1])$ is similar.

Indeed, if $\inf \vert \alpha \vert <1 ,$ then we can find a closed subinterval $[t_{1},t_{2}] \subseteq [0,1] $ such that $\vert \alpha (t) \vert  \leq 1-\epsilon$ for all $t \in  [t_{1},t_{2}] ,$ where $\epsilon \in (0,1-\inf \vert \alpha \vert) .$ Let $g \in \mathcal{A} $ be such that  $\text{supp } g \subseteq [t_{1},t_{2}] .$ Letting $g$ play the role of $\chi_{M_{\epsilon}} $ in the arguments above, we can show that $\lbrace \alpha \in \mathcal{A} \vert \inf \vert \alpha \vert <1 \rbrace \subseteq \sigma_{p}^{\mathcal{A}}(Z^{\prime}) .$ 

Next, the equation $\alpha x_{1}-x_{1} =0$ has a nontrivial solution in $\mathcal{A}=C([0,1]) $ if and only if $\text{supp } (\alpha - 1)\neq [0,1] $ which holds if and only if $\alpha = 1 $ on some closed subinterval $J \subseteq [0,1] .$ In this case we choose an $h \in \mathcal{A} $ such that  $\text{supp } h \subseteq J ,$ then $(h,0,0,0,\dots) \in \ker (\alpha I - Z^{\prime}) .$
\vspace{5pt}
\begin{center}
	Acknowledgement	
\end{center}
First of all I am very grateful to Professor Camillo Trapani and Professor Dragan Djordjevic for suggesting me to consider generalized spectra in $C^{*}$-algebras of operators on Hilbert $C^{*}$-modules.

Also, I am very grateful to Professor Ajit Iqbal Singh for inspiring comments, remarks and advises that led to the improved version of the paper.

\newpage

\vspace{35pt}
\begin{flushleft}
	Stefan Ivkovi\'{c} \\
The Mathematical Institute of the Serbian Academy of Sciences and Arts, \\
p.p. 367, Kneza Mihaila 36, 11000 Beograd, Serbia,\\
Tel.: +381-69-774237 \\
email: stefan.iv10@outlook.com  

\end{flushleft}

\end{document}